\newtheorem*{thm*}{Theorem}
\newcommand{\ff}{{\mathcal F}}
\newcommand{\aaa}{{\mathcal A}}
\newcommand{\G}{{\mathcal G}}
\newtheorem*{cla*}{Claim}
\newtheorem{thm}{Theorem}
\newtheorem{gypo}{Conjecture}
\newtheorem{opr}{Definition}
\newtheorem{lem}[thm]{Lemma}
\newtheorem{cla}[thm]{Claim}
\newtheorem*{prb}{Problem}
\newtheorem{cor}[thm]{Corollary}
\date{}
\newtheorem{prop}[thm]{Proposition}
\newtheorem{defn}[thm]{Definition}
\date{}
\title{Beyond the Erd\H os Matching Conjecture}
\author{Peter Frankl}\address{R\'enyi Institute, Budapest, Hungary; MIPT, Moscow, Russia; Email: {\tt peter.frankl@gmail.com}}
\author{Andrey Kupavskii}
\address{CNRS, Grenoble, France; IAS, Princeton, US;
MIPT, Moscow, Russia; Email: {\tt kupavskii@ya.ru}.}
\begin{document}
\maketitle
\begin{abstract}
  A family $\ff\subset {[n]\choose k}$ is $U(s,q)$ of for any $F_1,\ldots, F_s\in \ff$ we have $|F_1\cup\ldots\cup F_s|\le q$. This notion generalizes the property of a family to be $t$-intersecting and to have matching number smaller than $s$.

  In this paper, we find the maximum $|\ff|$ for $\ff$ that are $U(s,q)$, provided $n>C(s,q)k$ with moderate $C(s,q)$. In particular, we generalize the result of the first author on the Erd\H os Matching Conjecture and prove a generalization of the Erd\H os--Ko--Rado theorem, which states that for $n> s^2k$  the largest family $\ff\subset {[n]\choose k}$ with property $U(s,s(k-1)+1)$ is the star and is in particular intersecting. (Conversely, it is easy to see that any intersecting family in ${[n]\choose k}$ is $U(s,s(k-1)+1)$.)

  We investigate the case $k=3$ more thoroughly, showing that, unlike in the case of the Erd\H os Matching Conjecture, in general there may be $3$ extremal families.
\end{abstract}
\section{Introduction}

Let $[n] = \{1,\ldots, n\}$ be the standard $n$--element set and let ${[n]\choose k}$ denote the collection of all its $k$-subsets, $1\le k<n$. A {\it $k$-graph} (or a {\it $k$-uniform family}) $\ff$ is simply a subset of ${[n]\choose k}$. Let us recall two fundamental results from extremal set theory.

\begin{thm}[Erd\H os--Ko--Rado Theorem \cite{EKR}] Let $t$ be a positive integer, $t\le k$ and suppose that $|F\cap F'|\ge t$ for all pairs of edges of the $k$-graph $\ff$. Then
\begin{equation}\label{eq1} |\ff|\le {n-t\choose k-t}
\end{equation}
holds for all $n\ge n_0(k,t)$. Moreover, if $t=1$ then we can take $n_0(k,t) = 2k$.
\end{thm}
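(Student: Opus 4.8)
The plan is to treat the two assertions separately: the inequality \eqref{eq1} for all sufficiently large $n$ follows from a short transversal (covering) argument, whereas the sharp threshold $n_0(k,1)=2k$ in the ``moreover'' part is precisely what Katona's cyclic permutation method yields.

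For the general bound, given a $t$-intersecting family $\ff\subseteq\binom{[n]}{k}$, define its $t$-cover number $\tau=\tau(\ff)$ to be the smallest size of a set $S\subseteq[n]$ with $|F\cap S|\ge t$ for every $F\in\ff$; since any single member of $\ff$ is such an $S$, we have $t\le\tau\le k$. If $\tau=t$, a minimum cover $S$ satisfies $|S|=t$ and $|F\cap S|\ge t=|S|$, i.e.\ $S\subseteq F$ for all $F\in\ff$, so $|\ff|\le\binom{n-t}{k-t}$ as claimed. If $\tau\ge t+1$, fix a minimum cover $S$; every $F\in\ff$ contains some $t$-subset of $S$, hence $\ff=\bigcup_{T}\ff_T$ over the $t$-subsets $T\subseteq S$, where $\ff_T=\{F\in\ff:T\subseteq F\}$. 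Since $|T|=t<\tau$ the set $T$ is not a cover, so some $F_T^*\in\ff$ has $|F_T^*\cap T|\le t-1$; then every $F\in\ff_T$ has $|F\cap F_T^*|\ge t$ while $|F\cap F_T^*\cap T|\le t-1$, so $F\setminus T$ meets the fixed set $F_T^*\setminus T$ of size at most $k$. Bounding the number of $(k-t)$-subsets of $[n]\setminus T$ that hit a fixed $k$-set, we get $|\ff_T|\le k\binom{n-t-1}{k-t-1}$ and hence $|\ff|\le\binom{k}{t}k\binom{n-t-1}{k-t-1}=O(n^{k-t-1})$. Since $\binom{n-t}{k-t}=\Theta(n^{k-t})$ grows faster, for $n\ge n_0(k,t)$ this is strictly smaller than the size of a full star, so the case $\tau\ge t+1$ is never extremal and \eqref{eq1} holds for every $t$-intersecting family.

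For the sharp threshold when $t=1$, arrange $[n]$ on a circle and call $F\in\binom{[n]}{k}$ an \emph{arc} if its elements are consecutive. The combinatorial heart is the claim that for $n\ge 2k$ a pairwise intersecting family of $k$-arcs of a fixed circle has at most $k$ members: fixing one arc $A$ of the family, an arc disjoint from $A$ would lie inside the complementary arc on $n-k\ge k$ points, so every other arc overlaps $A$, and a short analysis of where such an arc can begin (it must contain one of the two endpoints of $A$) shows that at most $k$ of them can be pairwise intersecting, the bound being met by the $k$ arcs through a common point. Now double count the pairs $(\pi,F)$ with $\pi$ a cyclic order of $[n]$ and $F\in\ff$ an arc of $\pi$: over the $(n-1)!$ cyclic orders there are at most $k(n-1)!$ such pairs, while each $F\in\ff$ is an arc of exactly $k!\,(n-k)!$ cyclic orders, so $|\ff|\cdot k!\,(n-k)!\le k\,(n-1)!$, that is $|\ff|\le\binom{n-1}{k-1}$, which is \eqref{eq1} for $t=1$ and $n\ge 2k$.

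The main obstacle is different in the two parts. In the transversal argument everything is mechanical once one notices the dichotomy on $\tau$; the one genuine point is that when $\tau\ge t+1$, a $t$-set $T$ failing to be a cover pins every member of $\ff_T$ down to meeting a bounded set, which is exactly what lowers the order of magnitude of $|\ff_T|$ in $n$. In the case $t=1$ the delicate step is the arc claim precisely at the boundary $n=2k$, where the complementary arc has exactly $k$ points and the endpoint bookkeeping must be carried out carefully (for $n>2k$ it is easier). One should add that determining the best threshold $n_0(k,t)$ for $t\ge 2$, which equals $(k-t+1)(t+1)$, requires the finer shifting analysis of the first author, but that refinement is not needed for the statement as given.
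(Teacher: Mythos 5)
Your proof is correct. The paper does not actually prove this theorem; it is stated as a cited background result from \cite{EKR}, so there is no internal argument to compare against. For what it's worth, your two-part argument is sound and standard: the cover-number dichotomy gives the asymptotic bound for general $t$ (if $\tau(\ff)=t$ the family is a $t$-star; if $\tau\ge t+1$ every $\ff_T$ is pinned to meet the bounded set $F_T^*\setminus T$, yielding a count of lower order in $n$), and Katona's cycle method gives the sharp threshold $n_0(k,1)=2k$. The only place one should be careful is the arc claim at the boundary $n=2k$; the clean way to finish it, as you hint, is to pair the arc starting just after position $i$ of $A$ with the arc ending at position $i$ of $A$, for $i=1,\ldots,k-1$: each such pair is disjoint when $n\ge 2k$, so at most one from each pair joins $A$, giving at most $k$ arcs in total. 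Your closing remark that the exact value $n_0(k,t)=(k-t+1)(t+1)$ for $t\ge 2$ requires a finer analysis (Frankl, Wilson, Ahlswede--Khachatrian) is also accurate and appropriately flagged as beyond what the stated theorem asks for.
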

The family $\big\{F\in{[n]\choose k}: [t]\subset F\big\}$ shows that \eqref{eq1} is the best possible.

Let $p,r$ be non-negative integers with $p\ge r$. Define

\begin{equation}\label{defa}\aaa_{p,r} := \aaa(p,r,n,k):=\Big\{A\in {[n]\choose k}: |A\cap [p]|\ge r\Big\}.\end{equation}
(We omit $n,k$ when they are clear from the context.) The Erd\H os--Ko--Rado family mentioned above is simply $\aaa_{r,r}$. These families arise in several important results and open questions in extremal set theory.\vskip+0.1cm

The following extension of the Erd\H os--Ko--Rado theorem was conjectured by Frankl \cite{F1}, proved in many cases by Frankl and F\" uredi \cite{FF}, and nearly 20 years later proved in full generality by Ahlswede and Khachatrian \cite{AK2}.

\begin{thm}[Complete Intersection Theorem \cite{AK2}]\label{thmak} Suppose that $\mathcal F\subset{[n]\choose k}$ is $t$-intersecting, $n\ge 2k-t$. Then
\begin{equation}\label{art10_eqstar} |\mathcal F|\le \max_{0\le i\le k-t}|\mathcal A_{2i+t,i+t}|.
\end{equation}
Moreover, unless $n = 2k, t=1$ or $\mathcal F$ is isomorphic to $\mathcal A_{2i+t,i+t}$, the inequality is strict.
\end{thm}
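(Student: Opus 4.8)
\smallskip
\noindent\textit{Sketch of an approach.}
The natural route, and essentially the one Ahlswede and Khachatrian carried out, is \emph{compression followed by a local transformation}. First I would reduce to \emph{left-compressed} families: for $1\le i<j\le n$ the shift operator $S_{ij}$ (replace $F\ni j$, $F\not\ni i$, by $(F\setminus\{j\})\cup\{i\}$ when that set is new) preserves the uniformity, the cardinality, and --- by a short direct check on a pair $F,F'\in\ff$ and their images --- the $t$-intersecting property, so iterating shifts until nothing changes we may assume $\ff$ is an initial family in the coordinatewise order. Since each $\aaa_{2i+t,i+t}$ is itself left-compressed, it suffices to prove \eqref{art10_eqstar} for left-compressed $\ff$; for the uniqueness clause one further checks that, starting from a \emph{maximum} family, each shift can be undone without leaving the set of maxima, so equality forces $\ff$ to be isomorphic to some $\aaa_{2i+t,i+t}$ --- except for the genuinely degenerate case $n=2k$, $t=1$, where taking complements of members of complementary pairs produces a host of extra extremal families.

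Next I would exploit that for a left-compressed family the $t$-intersecting condition is \emph{local}: the worst pair $F,F'$ can always be taken with $F\cup F'$ inside a bounded initial segment of $[n]$, so $\ff$ is determined by a small \emph{generating set} of its (coordinatewise-)minimal members, and the extremal problem becomes a finite optimization over such configurations. In particular one reads off, for each left-compressed $t$-intersecting $\ff$, the relevant ``level'' $i$ against which $\aaa_{2i+t,i+t}$ is the natural competitor, the optimal level sliding from $i=0$ (large $n$) to $i=k-t$ (as $n\to 2k-t$).

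The heart of the matter --- and the step I expect to be by far the hardest --- is the pushing--pulling transformation: given a left-compressed maximum $\ff$ that is not yet one of the $\aaa_{2i+t,i+t}$, one must locate a coordinate, or a pair of adjacent levels, along which a combined ``push up / pull down'' operation keeps $\ff$ both left-compressed and $t$-intersecting, does not decrease $|\ff|$, and strictly simplifies the generating set (fewer distinct intersection patterns); iterating then drives $\ff$ to some $\aaa_{2j+t,j+t}$, and maximizing over $j$ yields \eqref{art10_eqstar}. Recording when such a move strictly decreases $|\ff|$ rather than merely preserving it is what produces the strictness/uniqueness statement and isolates the exception $n=2k$, $t=1$. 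Establishing that an improving move always exists requires a delicate case analysis on the structure of the generating set near the boundary $n=2k-t$, and this is the main obstacle. (In the easier sub-range $n\ge (t+1)(k-t+1)$, where $\aaa_{t,t}$ is the unique maximizer, one can shortcut all of this with the Frankl--F\"uredi argument or a Kruskal--Katona / eigenvalue bound, but the full range $n\ge 2k-t$ seems to need the transformation machinery.)
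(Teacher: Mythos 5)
This statement is not proved in the paper at all: Theorem~\ref{thmak} is quoted as a black box from Ahlswede and Khachatrian \cite{AK2}, so there is no ``paper's own proof'' to compare your attempt against. Taken on its own terms, your sketch is an accurate high-level roadmap of the Ahlswede--Khachatrian argument --- reduction to left-compressed families via the exchange operators $S_{ij}$ (which indeed preserve cardinality, uniformity, and the $t$-intersecting property), the observation that each $\aaa_{2i+t,i+t}$ is itself left-compressed, the reduction to a finite optimization over generating sets, the pushing--pulling transformation, and the isolation of the degenerate case $n=2k$, $t=1$. But you have explicitly left the pushing--pulling step, which you correctly identify as the heart of the matter, as an unproved ``main obstacle.'' Until that transformation is constructed, shown to preserve left-compressedness and the $t$-intersecting property, shown not to decrease $|\ff|$, and shown to terminate at some $\aaa_{2j+t,j+t}$, the argument has a genuine and central gap: what you have written is a credible plan for a proof, not a proof. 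If the goal were only the sub-range $n\ge(t+1)(k-t+1)$ your parenthetical shortcut via Frankl--F\"uredi or a spectral bound would suffice, but for the full range $n\ge 2k-t$ the pushing--pulling machinery is exactly what is missing.
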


For a $k$-graph $\ff$ let $\nu(\ff)$ denote its matching number, that is, the maximum number of pairwise disjoint edges in $\ff$. Obviously, for every positive integer $s$, $\nu\big({[(s+1)k-1]\choose k}\big)=s$.


The Erd\H os Matching Conjecture (the EMC for short) is one of the central open problems in extremal set theory.

\begin{gypo}[EMC \cite{E}] Let $n\ge (s+1)k$ and $\ff\subset {[n]\choose k}$. If $\nu(\ff)\le s$ then
\begin{equation}\label{eq2} |\ff|\le \max\big\{|\aaa_{s,1}|, |\aaa_{(s+1)k-1,k}|\big\}.
\end{equation}
\end{gypo}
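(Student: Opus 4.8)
The plan is to prove \eqref{eq2} by the shifting (compression) technique combined with induction on $s$, reducing the inductive step to a ``defect'' matching condition on the link of a single vertex. First I would apply the left-compressions $S_{ij}$, $1\le i<j\le n$, to $\ff$ until the family is stable under all of them; each $S_{ij}$ preserves $|\ff|$, keeps $\ff\subseteq\binom{[n]}{k}$, and does not increase $\nu(\ff)$, so we may assume $\ff$ is shifted. Both candidate extremal families $\aaa_{s,1}$ and $\aaa_{(s+1)k-1,k}$ are already shifted, so it suffices to prove \eqref{eq2} for shifted $\ff$, and the equality cases lift back in the usual way. The base case $s=1$ is the Erd\H os--Ko--Rado theorem quoted above, valid for $n\ge 2k$, where $|\aaa_{1,1}|=\binom{n-1}{k-1}\ge\binom{2k-1}{k}=|\aaa_{2k-1,k}|$.

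For the inductive step, assume \eqref{eq2} for $s-1$, fix shifted $\ff$ with $\nu(\ff)\le s$ and $n\ge(s+1)k$, and split along the vertex $1$: $|\ff|=|\aaa|+|\bb|$, where $\bb=\ff\cap\binom{[2,n]}{k}$ and $\aaa=\{F\setminus\{1\}:1\in F\in\ff\}\subseteq\binom{[2,n]}{k-1}$. Since any matching in $\ff$ either lies in $\bb$ or consists of one edge $\{1\}\cup G$ with $G\in\aaa$ together with edges of $\bb$ disjoint from $G$, the condition $\nu(\ff)\le s$ is equivalent to $\nu(\bb)\le s$ \emph{and}, for every $G\in\aaa$, $\nu\bigl(\bb|_{[2,n]\setminus G}\bigr)\le s-1$. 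If $\nu(\bb)\le s-1$ one bounds $|\bb|$ by the inductive hypothesis (on $[2,n]$, parameter $s-1$) and $|\aaa|$ trivially by $\binom{n-1}{k-1}$, and checks that this sum does not exceed the right-hand side of \eqref{eq2}. If instead $\nu(\bb)=s$, then by shiftedness a maximum matching of $\bb$ can be normalised to lie on $[2,sk+1]$, every $G\in\aaa$ must meet its vertex set, and one iterates the split on $\bb$; bookkeeping the resulting bounds on $|\aaa|$ and $|\bb|$ and comparing with $\max\{|\aaa_{s,1}|,|\aaa_{(s+1)k-1,k}|\}$ closes the step provided $n$ is at least a suitable constant multiple of $sk$.

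The serious obstacle is the regime where $n$ is close to $(s+1)k$: there the star $\aaa_{s,1}$ and the clique $\aaa_{(s+1)k-1,k}$ have comparable size, and the error terms generated by the induction no longer get absorbed, so the scheme above only delivers \eqref{eq2} for $n\gtrsim c\,sk$ with a constant $c>1$ (pure shifting gives $c\approx 2$; refining the inductive step by a random restriction --- pass to $\ff$ induced on a uniformly random $((s+1)k)$-subset of $[n]$ and extract $s+1$ pairwise disjoint edges from a positive-density, suitably spread family via a Kruskal--Katona shadow estimate --- lowers $c$ but not to $1$). Reaching $n=(s+1)k$ for \emph{all} $k$ is precisely the point where a genuinely new idea is required, and I expect this to be the crux of the whole argument. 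For $k\le 3$ the situation is more tractable: after shifting, the relevant $3$-uniform configurations near the boundary are rigid enough that a direct finite analysis settles the threshold, and that is the route I would take there.
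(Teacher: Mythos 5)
The statement you are asked to prove is labelled as a \emph{Conjecture} in the paper (the Erd\H os Matching Conjecture), and the paper contains no proof of it: the authors only record that it is known for $k\le 3$ and that the weaker bound \eqref{eq3} has been established for $n\ge n_0(k,s)$, then for $n\ge(2s+1)k$ in \cite{F4}, and for $n\ge\frac53 sk$ when $s$ is large in \cite{FK16}. Your proposal is, in substance, exactly the known partial-result machinery: shifting to reduce to a shifted family, splitting along the vertex $1$ into the link $\aaa$ and the deleted family $\bb$, and inducting on $s$ with a cross-dependence/defect-matching condition on the link. This is the engine behind \cite{F4} and behind the paper's own Theorem~\ref{thmsunion} (via Lemma~\ref{lemcd} and Theorem~\ref{thmshadowmatch}), and it provably delivers \eqref{eq3} only for $n\ge c\,sk$ with some constant $c>1$.

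The genuine gap is therefore the one you yourself flag in your last paragraph: the inductive bookkeeping only closes for $n$ at least a constant multiple of $sk$ strictly larger than $(s+1)k$, whereas the statement demands the full range $n\ge(s+1)k$, precisely where $|\aaa_{s,1}|$ and $|\aaa_{(s+1)k-1,k}|$ trade places and the error terms of the induction are no longer absorbed. No argument in the paper, and nothing in your sketch, bridges that regime for general $k$; for $k\ge 4$ the problem remains open. So your write-up should be presented as a proof of the partial result (the $n\ge(2s+1)k$ range, say), not of the conjecture as stated; as a proof of the boxed statement it is incomplete, and honestly so.
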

We note that both families appearing on the right hand side have matching number $s$. It is known to be true for $k\le 3$ (cf. \cite{EG}, \cite{F11}).

We should mention that Erd\H os proved \eqref{eq2} for $n\ge n_0(k,s)$. For such values of $n$ the bound is \begin{equation}\label{eq3}
           |\ff|\le {n\choose k}-{n-s\choose k}.
         \end{equation}
Improving earlier bounds \cite{BDE,HLS}, \eqref{eq3} was proved by the first author in \cite{F4} for $n\ge (2s+1)k$. For $s\ge s_0$ this was further improved by the present authors to $n\ge \frac 53sk$ (cf. \cite{FK16}).

Both the above results forbid certain intersection patterns (two sets intersecting in less than $t$ elements or $s+1$ sets having pairwise empty intersection).
In the present paper, we study restrictions on the maximum size of the union, rather than intersections, of $s+1$ edges of $\ff$. This setting permits to unify the above two results and to formulate a natural new problem.

\begin{defn}
  Let $k,s\ge 2$ and $k\le q<sk$ be integers. A $k$-graph $\ff\subset {[n]\choose k}$ is said to have property $U(s,q)$ if
  \begin{equation}\label{eq4}
    |F_1\cup\ldots\cup F_s|\le q
  \end{equation}
  for all choices of $F_1,\ldots, F_s\in \ff$. For shorthand, we will also say $\ff$ is $U(s,q)$ to refer to this property.
\end{defn}
With this definition, $\ff$ being $U(2,2k-t)$ is equivalent to  $\ff$ being $t$-intersecting and, similarly, $\ff$ being $U(s+1,(s+1)k-1)$ is equivalent to $\nu (\ff)\le s$.

\begin{defn}
  Let $n,k,s,q$ be integers, $n>q\ge k$, $sk>q$, $s\ge 2$. Define $m(n,k,s,q)$ as the maximum of $|\ff|$ over all $\ff\subset {[n]\choose k}$, where $\ff$ has property $U(s,q)$.
\end{defn}
With this terminology, Theorem~\ref{thmak} states that
\begin{equation}\label{eq5}
  m(n,k,2,2k-t)=\max_{0\le i\le k-t}|\aaa_{2i+t,i+t}|,
\end{equation}
and the EMC can be formulated as
{\small $$m(n,k,s+1,k(s+1)-1) = \max\big\{|\aaa_{s,1}|, |\aaa_{(s+1)k-1,k}|\big\}\ \ \  \Big(=\max\Big\{{n\choose k}-{n-s\choose k}, {k(s+1)-1\choose k}\Big\}\Big).$$}

For all choices of $A_1,\ldots, A_s\in \aaa_{p,r}$, we have
\begin{equation}\label{eq6}
  |A_1\cup\ldots \cup A_s|\le p+s(k-r).
\end{equation}
Thus, the EMC and the Complete Intersection Theorem may be seen as particular cases of the following general conjecture.

\begin{gypo}\label{conj2}
  For all choices $n,k,s,q$ one has $m(n,k,s,q) = |\aaa_{p,r}|$ for an appropriate choice of $p, r> 0$ with $\aaa_{p,r}$ having property $U(s,q)$. More precisely, if $q = (k-r)s+p$ with $r\le p\le s+r-2$, then $m(n,k,s,q) = \max_{0\le i\le k-r} |\aaa_{p+is,r+i}|$.
\end{gypo}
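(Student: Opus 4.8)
\medskip
\noindent\textbf{Proof proposal.}

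Write $t:=sk-q$. The hypothesis $r\le p\le s+r-2$ is exactly $(r-1)(s-1)<t\le r(s-1)$, so $(r,p)$ is determined by $(s,k,q)$ via $r=\lceil t/(s-1)\rceil$ and $p=rs-t$. The lower bound is immediate from \eqref{eq6}: for each $0\le i\le k-r$ one has $(p+is)+s\bigl(k-(r+i)\bigr)=p+s(k-r)=q$, so $\aaa_{p+is,r+i}$ has property $U(s,q)$, whence $m(n,k,s,q)\ge\max_{0\le i\le k-r}|\aaa_{p+is,r+i}|$. Everything below is the matching upper bound for $n>C(s,q)k$. First I would check, by the usual argument — simultaneously moving all of $F_1,\dots,F_s$ to suitable edges of $\ff$ obtained by swapping the roles of $i$ and $j$ — that property $U(s,q)$ is preserved by the compressions $S_{ij}$, so that we may assume $\ff$ is shifted.

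Since the $F_\ell$ in \eqref{eq4} need not be distinct, a matching $\mathcal M\subseteq\ff$ padded with repeated copies of an arbitrary $F\in\ff$ yields two basic facts: $\nu(\ff)\le s-1$ (otherwise $s$ disjoint edges have union of size $sk>q$), and $\bigl|F\setminus\bigcup\mathcal M\bigr|\le q-k|\mathcal M|$ for every $F\in\ff$ whenever $|\mathcal M|\le s-1$. Taking $\mathcal M$ of maximum size $\nu$, every edge is trapped inside the $k\nu$-set $\bigcup\mathcal M$ up to $q-k\nu$ vertices; in the generic case $\nu=s-1$ this says that $\ff$ is $t$-intersecting with a fixed $(s-1)k$-set, so $|\ff|=O_{s,q}(n^{k-t})$. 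A refined "shifted" form of this trapping — pinning down the small set of vertices of $\bigcup\mathcal M$ lying in almost every edge of $\ff$ — is what has to be upgraded to the exact count.

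The main engine is induction on $(k,n)$ (lexicographically), splitting a shifted $\ff$ on the vertex $1$: with $\ff(1)=\{F\setminus\{1\}:1\in F\in\ff\}$ and $\ff(\bar1)=\{F\in\ff:1\notin F\}$ one has $|\ff|=|\ff(1)|+|\ff(\bar1)|$, where $\ff(\bar1)\subseteq\binom{[2,n]}{k}$ is $U(s,q)$ and $\ff(1)\subseteq\binom{[2,n]}{k-1}$ is $U(s,q-1)$ — because $F_\ell\cup\{1\}\in\ff$ forces $\bigl|\bigcup F_\ell\bigr|\le q-1$. The reduced parameters stay admissible, since $q-1=\bigl((k-1)-(r-1)\bigr)s+(p-1)$ with $r-1\le p-1\le s+(r-1)-2$; and splitting $\aaa_{p+is,r+i}$ on $1$ gives the exact identity $|\aaa_{p+is,r+i}(n,k)|=|\aaa_{(p-1)+is,(r-1)+i}(n-1,k-1)|+|\aaa_{(p-1)+is,r+i}(n-1,k)|$, whose first summand is the link (whose size we bound by applying the inductive hypothesis to $\ff(1)$) and whose second summand is the deletion. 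The crux is that one cannot bound $\ff(\bar1)$ and $\ff(1)$ separately — the copy of $\aaa_{p,r}$ on $[2,n]$ shows that $\ff(\bar1)$ alone can be as large as the full $U(s,q)$ optimum — so one must exhibit the trade-off: for a shifted $\ff$, either $\ff$ already lies inside one of the $\aaa_{p+is,r+i}$ (and we are done), or the union bound governing $\ff(\bar1)$ effectively drops from $q$ to $q-1$ because a dominating vertex has been deleted. This is exactly the obstruction that is already hard in the Erd\H os Matching Conjecture, and I expect to resolve it the same way: use shiftedness to move a maximum matching of $\ff(\bar1)$ to the front of $[n]$ and play the localisation estimates of the previous paragraph against its $k\nu$ "core" vertices, the hypothesis $n>C(s,q)k$ guaranteeing enough free vertices outside the core for the exchange arguments to close.

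Finally I would dispose of the base cases — $s=2$, where the bound is Theorem~\ref{thmak} in the range $n\ge 2k-t$; and $k=r$, where only $i=0$ is admissible and the statement reduces to the Erd\H os--Ko--Rado-type result announced in the abstract — and verify that the quantity produced by the induction really is $\max_{0\le i\le k-r}|\aaa_{p+is,r+i}|$ by comparing consecutive values of $|\aaa_{p+is,r+i}|=\sum_{j\ge r+i}\binom{p+is}{j}\binom{n-p-is}{k-j}$; since $|\aaa_{p+is,r+i}|=\Theta_{s,q}(n^{k-r-i})$, for $n$ above a moderate threshold only boundedly many $i$ can beat $i=0$, so this is a finite comparison. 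The single genuinely hard step is the trade-off in the previous paragraph — equivalently, ruling out "spread" $U(s,q)$ families, those supported on $\Omega(n)$ vertices all of whose $s$-wise unions still have size $\le q$. For the two solved special cases a spread competitor is governed by a single $\aaa_{p,r}$ (the case $s=2$) or by the two extreme families (the case $q=sk-1$); for general $(s,q)$ it could a priori interpolate among several of the $\aaa_{p+is,r+i}$, and tracking this interpolation is where the new difficulty lies.
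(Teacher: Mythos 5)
Your proposal does not — and cannot, by currently known methods — prove this statement. Conjecture~\ref{conj2} is exactly that: a \emph{conjecture}, and the paper states explicitly that it subsumes the Erd\H os Matching Conjecture (the case $r=1$, $p=s$), which is one of the central open problems in extremal set theory. There is no proof of this statement in the paper, and any complete argument for it would, as a byproduct, settle the EMC. Your own final paragraph concedes the point: you single out the ``trade-off'' between $\ff(1)$ and $\ff(\bar1)$ (equivalently, ruling out ``spread'' $U(s,q)$ families) as ``the single genuinely hard step'' and leave it open. So what you have produced is a plan of attack together with a correct lower-bound observation (that each $\aaa_{p+is,r+i}$ satisfies $U(s,q)$ via \eqref{eq6}), not a proof. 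In addition, the statement is asserted for all admissible $n$, but your argument silently inserts the hypothesis $n>C(s,q)k$ from the second sentence on; in the complementary small-$n$ range, where the maximizer is $\aaa_{p+is,r+i}$ for some $i>0$, the paper only has isolated results (Theorems~\ref{thmsmallq}, \ref{thmf3}, \ref{thmf2} for $k=3$), not a uniform treatment.

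For the large-$n$ regime, your sketch does have a concrete counterpart in the paper — Theorem~\ref{thmsunion} — but by a different route. You propose a double induction on $(k,n)$ via the split $|\ff|=|\ff(1)|+|\ff(\bar1)|$, essentially Proposition~\ref{prophier} pushed one level deeper; the correctness of the link reduction (that $\ff(1)$ is $U(s,q-1)$ with parameters $(k-1,r-1,p-1)$ staying admissible) is fine. The paper's Theorem~\ref{thmsunion} instead fixes a shifted $\ff$, localizes it into $\bigcup_{i}\aaa_{p+is,r+i}$ using Proposition~\ref{propshiftedunion}, and then bounds the link sizes $|\ff(B)|$ for small $B\subset[p+1]$ directly by chaining the shadow-versus-matching inequality (Theorem~\ref{thmshadowmatch}) with the cross-dependence Lemma~\ref{lemcd} — no induction on $k$ is used. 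One additional caution about your induction: when it reaches $r=1$, the link branch sends $r$ to $0$, where the conjectured parameter range ($p,r>0$) is violated and the $U$-constraint on $\ff(1)$ becomes vacuous, so that branch only returns the trivial bound $\binom{n-1}{k-1}$; this degeneracy is precisely where your unproved trade-off would have to carry the whole weight, and it is precisely the EMC itself.
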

In particular, Theorem~\ref{thmak} is the case $p=r=t$, $s=2$ of the conjecture.

Let us remark that the {\it non-uniform} version of Conjecture~\ref{conj2} goes back to the PhD dissertation of the first author (cf. also \cite{F76} and \cite{F79} where it is proved in a certain range). Let us also mention that the $s=2$ case of the non-uniform case is a classical result of Katona \cite{Ka1964}.\vskip+0.1cm

\subsection{Simple properties of $m(n,k,s,q)$} Let us give some additional motivation for the question. Recall the following definition.

\begin{opr}\label{defsh}
  Consider two sets $F_i = (a_1^i,\ldots, a_k^i)$ with $a_1^i< a_2^i<\ldots<a_k^i$ for $i=1,2$.
  Then $F_1\prec_s F_2$ iff $a_j^1\le a_j^2$ for every $j\in [k]$. We say that a family $\ff\subset {[n]\choose k}$ is
  \emph{shifted} if $F\in \ff$ and $G\prec_s F$ implies $G\in \ff$.
\end{opr}

For many extremal problems, including the ones mentioned above, it is sufficient to prove the statements for shifted families.
 Let $\ff\subset {[n]\choose k}$ be a shifted family satisfying $\nu(\ff)\le s$. In dealing with such families one often considers subfamilies of the form $\ff\big(\bar{[r]}\big):=\{F\in\ff:F\cap [r] = \emptyset\}$. As we noted above, $\nu(\ff)< s$ is equivalent to $\ff$ being $U(s,sk-1)$. Due to shiftedness, $\ff\big(\bar{[r]}\big)$ has property $U(s,ks-1-r)$. \vskip+0.1cm

There is a certain hierarchy for properties $U(s,ks-r)$ in the range $1\le r<s$.
To explain it, recall that $\aaa_{p,1}$ has property $U(s,(k-1)s+p)$ for $1\le p<s$.
\begin{prop}\label{prophier}
  If $m(n,k,s,(k-1)s+p)) = {n\choose k}-{n-p\choose k}$ then
  $$m(n+1,k,s,(k-1)s+p+1) = {n+1\choose k}-{n-p\choose k}.$$
\end{prop}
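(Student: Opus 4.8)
The plan is to take an extremal family $\ff\subset{[n+1]\choose k}$ with property $U(s,(k-1)s+p+1)$ and $|\ff| = m(n+1,k,s,(k-1)s+p+1)$, and to extract from it a family of size at least ${n\choose k}-{n-p\choose k}$ that is $U(s,(k-1)s+p)$ on the ground set $[n]$; then the hypothesis forces equality and we are done as soon as we match the claimed value. Since $\aaa_{p,1}$ on $[n+1]$ has size ${n+1\choose k}-{n-p\choose k}$, the lower bound $m(n+1,k,s,(k-1)s+p+1)\ge {n+1\choose k}-{n-p\choose k}$ is immediate, so the whole content is the upper bound.

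First I would argue that we may assume $\ff$ is \emph{shifted}: the shifting operator $S_{ij}$ does not increase the size of any union $F_{i_1}\cup\dots\cup F_{i_s}$ (this is the standard observation used for the EMC and for intersection problems), so it preserves property $U(s,q)$, and it preserves $|\ff|$. Now fix the element $n+1$ and split $\ff = \ff_0\sqcup \ff_1$, where $\ff_0 = \{F\in\ff: n+1\notin F\}$ and $\ff_1 = \{F\in\ff: n+1\in F\}$. The family $\ff_0\subset{[n]\choose k}$ is clearly still $U(s,(k-1)s+p+1)$; the family $\ff_1' := \{F\setminus\{n+1\}: F\in\ff_1\}\subset{[n]\choose k-1}$ is a $(k-1)$-uniform family. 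The key structural step is to control $\ff_1'$ using shiftedness and the union bound: if $G_1,\dots,G_s\in\ff_1'$, then $(G_1\cup\{n+1\}),\dots,(G_s\cup\{n+1\})\in\ff$, so $|G_1\cup\dots\cup G_s|+1\le (k-1)s+p+1$, i.e.\ $|G_1\cup\dots\cup G_s|\le (k-1)s+p = (k-2)s + (p+s) $. More usefully, I expect the right bookkeeping to show that $\ff_1'$ is in fact $U(s,(k-1)s+p-(s-1)) = U\big(s,(k-1)(s-1)+p\big)$ after one uses shiftedness to push $\ff_1$ down, because $n+1$ is the largest element and each $G_i\cup\{n+1\}$ wastes the slot $n+1$ in the union; one then wants to recognize $\ff_1'$ as a $(k-1)$-uniform family satisfying the ``one level down'' instance of the same problem.

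The cleanest route is probably to compare directly with $\aaa_{p,1}$ on $[n+1]$: write $|\ff| = |\ff_0| + |\ff_1|$ and bound $|\ff_0|\le m(n,k,s,(k-1)s+p) = {n\choose k}-{n-p\choose k}$ by hypothesis, and bound $|\ff_1| = |\ff_1'|\le {n\choose k-1}-{n-p\choose k-1}$ by induction/analogue of the same statement one uniformity lower (note $\aaa_{p,1}$ in ${[n]\choose k-1}$ has exactly this size and is $U(s,(k-1)(s-1)+p)$; here one needs that the relevant instance of Conjecture~\ref{conj2}, or rather just its star case, holds — which is precisely the inductive hypothesis fed to us along the $k$-parameter, or follows from the fact that $\aaa_{p,1}$ achieves the trivial ``contained in a star'' bound once $\ff$ is shifted). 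Adding, $|\ff|\le \big({n\choose k}-{n-p\choose k}\big) + \big({n\choose k-1}-{n-p\choose k-1}\big) = {n+1\choose k} - {n-p+1\choose k}$. The last Pascal identity gives exactly the wrong constant by one, so the final step must be to show that $\ff_0$ and $\ff_1'$ cannot \emph{simultaneously} be maximal: if $\ff_1'$ has full size ${n\choose k-1}-{n-p\choose k-1}$ then by the equality case it is essentially $\aaa_{p,1}$, which forces (via shiftedness, since $F\in\ff_1$ and $G\prec_s F$ with $G\ni n+1$ forces $G\in\ff_1$, and symmetrically elements of $\ff_0$ obtained by replacing $n+1$) a constraint on $\ff_0$ that drops its size by exactly ${n-p\choose k-1} - \big({n-p+1\choose k} - {n-p\choose k}\big)$-worth, recovering the bound ${n+1\choose k}-{n-p\choose k}$.

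The main obstacle is this last balancing act — showing $|\ff_0|+|\ff_1'|$ cannot reach the naive sum. I expect the honest argument to avoid the crude split entirely and instead run an exchange/compression argument on the pair $(\ff_0,\ff_1)$ directly: replace each $F\in\ff_1$ whose shadow $F\setminus\{n+1\}$ is ``new'' by an element of $\aaa_{p,1}$, tracking that the union constraint $U(s,(k-1)s+p+1)$ on $[n+1]$ degrades to $U(s,(k-1)s+p)$ on $[n]$ exactly because the element $n+1$ is spent. Making that degradation precise — i.e.\ proving that the induced family on $[n]$ genuinely satisfies the smaller-$q$ property and not merely the same one — is the crux, and I would isolate it as a lemma about how $U(s,q)$ interacts with link/deletion of the maximal element in a shifted family.
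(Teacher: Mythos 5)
Your plan splits $\ff\subset\binom{[n+1]}{k}$ at the \emph{largest} element $n+1$, and this is where the argument breaks down. Shifting compresses a family towards the small elements, so the sets that \emph{avoid} $n+1$ get no extra structure at all: $\ff_0=\{F\in\ff:n+1\notin F\}$ is still only $U(s,(k-1)s+p+1)$ on $[n]$, not $U(s,(k-1)s+p)$. The bound $|\ff_0|\le m(n,k,s,(k-1)s+p)=\binom nk-\binom{n-p}{k}$ is therefore unjustified (indeed, if it and your bound on $\ff_1'$ both held, the crude sum would already be \emph{smaller} than the target value, which is impossible since the target is attained). Likewise, the ``one uniformity lower'' step is not available: for a $(k-1)$-uniform family the constraint $U(s,(k-1)s+p)$ that you derive for $\ff_1'$ is vacuous, since $s$ sets of size $k-1$ can never have union larger than $(k-1)s$ anyway; the strengthened constraint $U(s,(k-1)(s-1)+p)$ you say you ``expect'' does not come from deleting the top element of a shifted family. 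There is also a bookkeeping slip: $\aaa_{p,1}$ on $[n+1]$ has size $\binom{n+1}{k}-\binom{n+1-p}{k}$, not $\binom{n+1}{k}-\binom{n-p}{k}$; the correct extremal candidate on $[n+1]$ is $\aaa_{p+1,1}$, consistent with $q$ increasing by $1$.

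The paper's proof is a one-liner that splits at the \emph{smallest} element: write $\ff=\ff(1)\sqcup\ff(\bar 1)$ where $\ff(\bar 1)=\{F\in\ff:1\notin F\}$. Because $\ff$ is shifted and $q<sk$, the subfamily $\ff(\bar 1)$ satisfies the \emph{stronger} property $U(s,(k-1)s+p)$ (this is the standard fact: if $s$ members of a shifted $U(s,q)$-family all avoid $1$ and are not pairwise disjoint, then replacing a repeated element of one of them by $1$ stays in $\ff$ and enlarges the union by one, so their union has size at most $q-1$). Since $\ff(\bar 1)$ lives on an $n$-element ground set, the hypothesis gives $|\ff(\bar 1)|\le\binom nk-\binom{n-p}{k}$, and the link $\ff(1)$ is bounded trivially by $\binom n{k-1}$. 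Adding and using Pascal's rule yields $\binom{n+1}{k}-\binom{n-p}{k}$. So the correct direction is: delete $1$, not $n+1$; the gain in the union property comes precisely from shiftedness pulling sets towards $1$, and no induction on $k$ is needed.
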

\begin{proof}
  Let $\ff\subset {[n+1]\choose k}$ be a shifted family satisfying property $U(s,(k-1)s+p+1)$. Then $\ff(\bar 1)$ is $U(s,(k-1)s+p)$ by shiftedness, and so $|\ff(\bar 1)|\le {n\choose k}-{n-p\choose k}$ follows. Since $|\ff(1)|\le {n\choose k-1}$, we get $|\ff|=|\ff(1)|+|\ff(\bar 1)|\le {n+1\choose k}-{n-p\choose k}$.
\end{proof}


Shiftedness also allows us to prove the following proposition.

\begin{prop}\label{propshiftedunion}
  Assume that $\ff$ is shifted and has property $U(s,q)$ for $q=(k-r)s+p$ with $0<r\le p\le s+r-2$. Then $\ff\subset \cup_{i=0}^{k-r}\aaa_{p+is,r+i}$. Consequently, $m(n,k,s,q)\le \sum_{i=0}^{k-r}|\aaa_{p+is,r+i}|$.
\end{prop}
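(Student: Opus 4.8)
The plan is to reduce to a single edge and argue by contradiction. It is enough to show that every $F\in\ff$ lies in $\aaa_{p+is,r+i}$ for at least one $i\in\{0,\dots,k-r\}$, for then $\ff\subset\bigcup_{i=0}^{k-r}\aaa_{p+is,r+i}$, and the bound $m(n,k,s,q)\le\sum_{i=0}^{k-r}|\aaa_{p+is,r+i}|$ follows from the union bound. So suppose, for contradiction, that some $F=\{a_1<\dots<a_k\}\in\ff$ lies in none of these families. Since $F\notin\aaa_{p+is,r+i}$ says $|F\cap[p+is]|\le r+i-1$, which in terms of the sorted elements of $F$ means $a_{r+i}\ge p+is+1$, we may set $c_i:=p+is+1$ and record that $a_{r+i}\ge c_i$ for all $i=0,\dots,k-r$; note $c_0=p+1$ and $c_{k-r}=q+1$, so in particular $a_k\ge q+1$ (this also shows $q+1\le n$, so the integers used below are legitimate elements).

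Next I would construct $s$ edges $F_1,\dots,F_s\in\ff$ with $F_1\cup\dots\cup F_s=[q+1]$, contradicting property $U(s,q)$. Each $F_t$ is taken to be $\{1,\dots,r-1\}$ together with a strictly increasing ``staircase'' tail $e^t_0<e^t_1<\dots<e^t_{k-r}$ occupying positions $r,r+1,\dots,k$, built so that at each level $i$ the entries $\{e^t_i:1\le t\le s\}$ tile a block of consecutive integers: for $i=0$ this block is $\{r,r+1,\dots,p+1\}$, which has $p-r+2$ elements, and here one uses $p\le s+r-2$ to fit it into $s$ rows (take $e^t_0:=\min\{r-1+t,\,p+1\}$); for $i\ge1$ set $e^t_i:=c_{i-1}+t$, so the block at level $i$ is exactly $\{c_{i-1}+1,\dots,c_i\}$. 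The verifications are routine: the tail is increasing in each row since $c_i-c_{i-1}=s\ge1$ and $e^t_0\le p+1<p+2\le e^t_1$; the inequalities $e^t_i\le c_i\le a_{r+i}$ together with $j\le a_j$ for $j<r$ give $F_t\prec_s F$, hence $F_t\in\ff$ by shiftedness; and the blocks $\{r,\dots,p+1\}$ and $\{c_{i-1}+1,\dots,c_i\}$ for $i=1,\dots,k-r$ tile $\{r,\dots,q+1\}$, whence $\bigcup_{t=1}^s F_t=\{1,\dots,r-1\}\cup\{r,\dots,q+1\}=[q+1]$ and $|F_1\cup\dots\cup F_s|=q+1>q$.

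The two things being checked above ($F_t\prec_s F$, and that the blocks tile $\{r,\dots,q+1\}$) genuinely are routine; the point that needs care is that the construction is completely tight, and both halves of the hypothesis $r\le p\le s+r-2$ are essential: $r\le p$ ensures the level‑$0$ block $\{r,\dots,p+1\}$ starts strictly above the fixed block $\{1,\dots,r-1\}$, so each $F_t$ is an honest $k$-set, while $p\le s+r-2$ is precisely what lets the $p-r+2$ integers of that block be covered by $s$ rows. I expect this tightness bookkeeping — not any single estimate — to be the only real obstacle; once $\ff\subset\bigcup_{i=0}^{k-r}\aaa_{p+is,r+i}$ is in hand, the cardinality statement needs nothing further.
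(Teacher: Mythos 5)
Your proof is correct and follows the same strategy as the paper's: suppose some $F\in\ff$ avoids every $\aaa_{p+is,r+i}$, translate this into the coordinate inequalities $a_{r+i}\ge p+is+1$, and use shiftedness to produce $s$ sets below $F$ whose union is $[q+1]$, contradicting $U(s,q)$. The paper constructs the same staircase family (indexed in reverse), so the two arguments are essentially identical up to notation.
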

\begin{proof} Assume that this is not the case. This means that there exists $A\in \ff$ such that $|A\cap [p+is]|\le r+i-1$ for $i = 0,\ldots, k-r$. Any such $A$  satisfies $A\succ_{s} B$, where $B:= \{1,\ldots, r-1,p+1,p+s+1,\ldots, p+(k-r)s+1\}$, and thus $B\in \ff$ because $\ff$ is shifted. But then $B_i:= \{1,\ldots, r-1,p+1-\min\{i,p-r+1\}, p+s+1-i,\ldots, p+(k-r)s+1-i\}$ also belong to $\ff$ for each $i=0,\ldots, s-1$. However, $\cup_{i=0}^{s-1} B_i = [(k-r)s+p+1] = [q+1]$, a contradiction with the $U(s,q)$ property.
\end{proof}


\subsection{Results}
In this paper, we start exploring the quantity $m(n,k,s,q)$. One of the most appealing cases of our general results is the following generalization of the ($t=1$ case of the) Erd\H os--Ko--Rado theorem.

\begin{thm}\label{thm1ekr} Fix some positive integers $n,k,s$. Assume that $n>s(s+2)k$ and $\ff\subset {[n]\choose k}$ has property $U(s+1,k+s(k-1))$. In words, the latter condition means that $|F_0\cup\ldots\cup F_{s}|\le k+s(k-1)$ for any $F_0,\ldots, F_s\in \ff$. Then $|\ff|\le {n-1\choose k-1}$.
\end{thm}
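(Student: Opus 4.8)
The plan is to reduce to shifted families and then exploit the hierarchy of subfamilies $\ff(\bar{[r]})$ together with the previous propositions. Note that $k+s(k-1) = (k-1)(s+1)+1$, so in the notation of Conjecture~\ref{conj2} we have $s$ replaced by $s+1$, $r=1$, and $p=1$; the conjectured extremal family is $\aaa_{1,1}$, the star, of size $\binom{n-1}{k-1}$, and we must prove a matching upper bound. First I would invoke the standard shifting argument: the property $U(s+1,k+s(k-1))$ is preserved under the usual compression operators $S_{ij}$ (a union of $s+1$ edges in the shifted family is coordinatewise dominated by a union of $s+1$ edges of the original family, hence no larger), so it suffices to bound $|\ff|$ for $\ff$ shifted.

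So assume $\ff$ is shifted and $U(s+1,(k-1)(s+1)+1)$. Write $|\ff| = |\ff(1)| + |\ff(\bar1)|$. Trivially $|\ff(1)| \le \binom{n-1}{k-1}$, so it suffices to show $\ff(\bar1)=\emptyset$, i.e.\ that every edge of $\ff$ contains the element $1$. By shiftedness $\ff(\bar1)$ has property $U(s+1,(k-1)(s+1))$; if $\ff(\bar1)$ were nonempty it would contain (after further compressing, since $\ff(\bar 1)$ inherits shiftedness on $[2,n]$) an edge, and I want to derive a contradiction with the union bound. The cleanest route is Proposition~\ref{propshiftedunion}: with $s+1$ in place of $s$, $r=1$, $p=1$ it gives $\ff \subset \bigcup_{i=0}^{k-1}\aaa_{1+i(s+1),1+i}$, but this alone is not enough. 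Instead I would argue directly: if some $A\in\ff$ avoids $1$, then $A \succ_{s+1} B$ for $B=\{2, s+3, 2(s+1)+2, \ldots\}$ appropriately chosen with $|B\cup(\text{its }s\text{ shifts down})| = (k-1)(s+1)+1$, exactly as in the proof of Proposition~\ref{propshiftedunion} but shifted by one coordinate, contradicting $U(s+1,(k-1)(s+1)+1)$.

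The main obstacle is that the crude argument above needs $\ff(\bar 1)$ to actually contain a "spread-out" edge, which a shifted family need not — a shifted family could consist of edges all containing $1$ except for edges concentrated near $\{2,\ldots\}$. To handle this I expect the real proof uses the hypothesis $n > s(s+2)k$ in a genuine counting step rather than a pure shifting argument: bound $|\ff(\bar1)|$ by noting it is $U(s+1,(k-1)(s+1))$ hence by Proposition~\ref{propshiftedunion} contained in $\bigcup_{i=0}^{k-1}\aaa_{i(s+1),i}$ (on ground set $[2,n]$), and bound each $|\aaa_{i(s+1),i}|$ crudely by $\binom{(s+1)i}{i}\binom{n}{k-i}$; then $|\ff(\bar1)| \le \sum_{i=1}^{k-1}\binom{(s+1)i}{i}\binom{n-1}{k-i}$, and one checks this is strictly smaller than the "gain" $\binom{n-1}{k-1} - |\ff(1)|$ would have to be — more precisely one compares $|\ff| = |\ff(1)|+|\ff(\bar 1)|$ against $\binom{n-1}{k-1}$ using that if $1\notin F$ for some $F\in\ff$ then $\ff(1)$ is missing many edges (those meeting $F$), a standard "either the star is full or we lose a lot" dichotomy. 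The quantitative check that the threshold $n>s(s+2)k$ suffices for $\sum_{i\ge 1}\binom{(s+1)i}{i}\binom{n-1}{k-i} < \binom{n-2}{k-2}$ (the number of edges through $1$ forced to be absent) is the routine but essential calculation, and getting the constant $s(s+2)$ rather than something larger is presumably where care is needed; I would not grind through it here, but the binomial estimates $\binom{(s+1)i}{i}\le (e(s+1))^i$ and $\binom{n-1}{k-i}/\binom{n-2}{k-2} \approx (k/n)^{i-1}$ make the sum geometric with ratio $\approx e(s+1)k/n < 1$ once $n > e(s+1)k$, and the slightly larger threshold $s(s+2)k$ absorbs the lower-order terms.
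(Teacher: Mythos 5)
Your high-level plan (shift, split $\ff=\ff(1)\cup\ff(\bar1)$, bound $|\ff(\bar 1)|$ via Proposition~\ref{propshiftedunion} and show $\ff(1)$ misses enough sets to compensate) is not the route the paper takes, and the concrete step you write down does not hold up. The paper derives Theorem~\ref{thm1ekr} as the special case $p=r=1$ of Theorem~\ref{thmsunion}, whose proof works on the quotients $\ff(B)$ for $B\subset[p+1]$ and combines (a) a bound $\nu(\ff(\emptyset))\le s$, (b) cross-dependence of the families $\ff(\{i\})$, (c) the shadow/matching inequality $s|\partial\G|\ge|\G|$ from \cite{F4}, and (d) a weighted inequality for cross-dependent nested families (Lemma~\ref{lemcd}), summed carefully to get $\sum_{B\subset[p+1],\,|B|\le r}|\ff(B)|\le|\aaa'_{p,r}|$. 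None of that machinery appears in your sketch, and I don't see a substitute.

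Two concrete problems. First, your target estimate
$\sum_{i\ge 1}\binom{(s+1)i}{i}\binom{n-1}{k-i}<\binom{n-2}{k-2}$
cannot hold for large $n$: already the $i=1$ term $(s+1)\binom{n-1}{k-1}$ vastly exceeds $\binom{n-2}{k-2}\approx\frac{k-1}{n-1}\binom{n-1}{k-1}$. The exponents of $n$ don't match, so the ``geometric series'' heuristic is estimating the wrong quantity. (Also note that applying Proposition~\ref{propshiftedunion} to $\ff(\bar1)$, which is $U(s+1,(k-1)(s+1))$, forces $r=2$, $p=s+1$, not $r=1$, $p=0$; the case $p=0$ is outside the proposition's hypothesis $0<r\le p$, so the containment $\ff(\bar1)\subset\bigcup_i\aaa_{i(s+1),i}$ is not justified as stated.) Second, the ``star is full or we lose a lot'' dichotomy is a feature of intersecting families, not of union-bounded ones: a single $F\in\ff(\bar1)$ does not by itself exclude any particular edge through $1$ when the constraint involves $s+1$ sets. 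Without a quantitative lower bound on $\binom{n-1}{k-1}-|\ff(1)|$, the comparison has nothing to compare against. Finally, your claim that a threshold $n>e(s+1)k$ ``should suffice'' contradicts the paper's Sharpness remark: $\aaa_{s+1,2}$ is larger than the star for $n<cs^2k$ (for a small constant $c$), so any correct proof must genuinely require $n\gtrsim s^2k$, which is exactly what the machinery in the proof of Theorem~\ref{thmsunion} is built to deliver.
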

We note that any intersecting family $\ff\subset {[n]\choose k}$ has property  $U(s+1,(s+1)(k-1)+1)$ since for any $F, F'\in \ff$, $|F'\setminus F|\le k-1$, and so, for any $F_0,\ldots, F_s$, $|F_0\cup\ldots \cup F_s|\le |F_0|+|F_1\setminus F_0|+\ldots+|F_s\setminus F_0|\le k+s(k-1)$. Moreover, it is not difficult to see that, as long as $\ff$ is not a star and $s\ge k$, $\ff$ has property $U(s+1,q)$ for some $q<k+s(k-1)$.

Theorem~\ref{thm1ekr} is an immediate corollary of the  following general theorem. In it, we determine  $m(n,k,s+1,q)$ for all $q$ and $n>C(s) k$. Naturally, we are only interested in the values $q>k$.

\begin{thm}\label{thmsunion}
 Fix some integers $n,k,s,p,r$, such that $1\le r\le k$ and $r\le p\le s+r-1$. Suppose that $\ff\subset {[n]\choose k}$ has property $U(s+1,q)$ for $q=(k-r)(s+1)+p$.  If $n\ge p+1+(s+f(s,p,r))(k-r),$ \footnote{One may simply say $n>C(s)k$ instead, if the exact form of the dependence is not important} where
  $$f(s,p,r):=s(s+1) \cdot \frac{\sum_{j=0}^{r-1} s^{r-1-j}{p\choose j}}{{p\choose r}},$$
  then $$|\ff|\le |\aaa_{p,r}|.$$
\end{thm}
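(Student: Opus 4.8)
The overall strategy is the standard one for Erd\H os--Ko--Rado--type problems at large $n$: reduce to shifted families, bound the family by the union of the "canonical" families $\aaa_{p+is,r+i}$ via Proposition~\ref{propshiftedunion}, and then argue that for $n$ large the contribution of the "higher" pieces $\aaa_{p+is,r+i}$ with $i\ge 1$ is negligible compared to $|\aaa_{p,r}|$, so that the extremum is $\aaa_{p,r}$ itself. Concretely, I would first invoke the shifting operation: since the property $U(s+1,q)$ is preserved under shifting (applying a shift can only decrease every union $|F_{i_0}\cup\dots\cup F_{i_s}|$, exactly as in the standard argument), it suffices to prove the bound for shifted $\ff$. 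By Proposition~\ref{propshiftedunion} we then have $\ff\subset\bigcup_{i=0}^{k-r}\aaa_{p+is,r+i}$.

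The heart of the argument is to show that a shifted $U(s+1,q)$ family cannot use too many edges outside $\aaa_{p,r}$. Write $\ff = \ff_0 \cup \ff_1$, where $\ff_0 = \ff\cap\aaa_{p,r}$ and $\ff_1 = \ff\setminus\aaa_{p,r}$, so every $F\in\ff_1$ satisfies $|F\cap[p]|\le r-1$. Trivially $|\ff_0|\le|\aaa_{p,r}|$, so it suffices to exhibit an \emph{injection} (or a bounded-multiplicity map) from $\ff_1$ into $\aaa_{p,r}\setminus\ff_0$, or more simply to show $|\ff_1| < |\aaa_{p,r}| - |\ff_0|$ when $\ff$ is not contained in $\aaa_{p,r}$. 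I would do this by a counting/shadow argument: fix any $F\in\ff_1$; because $\ff$ is shifted and $F$ misses enough of $[p]$, many "shifts-down" of $F$, together with $F$ itself, would exhaust $[q+1]$ unless $\ff_1$ is small — this is the mechanism already used in the proof of Proposition~\ref{propshiftedunion}, but now applied quantitatively. The precise combinatorial estimate is where the function $f(s,p,r)$ and the expression $\sum_{j=0}^{r-1}s^{r-1-j}\binom pj \big/ \binom pr$ enter: one counts, for each choice of an $r$-subset $R\subset[p]$, how many edges $F\in\ff$ with $F\cap[p]=R'$ for various $R'$ of size $<r$ can coexist, and the binomial ratio measures how the "defect" sets of size $j<r$ compare in number to the "good" sets of size $r$. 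The bound on $n$ is exactly what makes $\binom{n-\text{something}}{k-r-\text{something}}$-type tails summable against the main term $\binom{n-p}{k-r}$.

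More explicitly, I expect the key inequality to take the following shape. For each $j$ with $0\le j\le r-1$ and each $j$-set $S\subset[p]$, consider $\ff_S:=\{F\in\ff: F\cap[p]=S\}$; shiftedness forces $\ff_S$, viewed inside $\binom{[n]\setminus[p]}{k-j}$, to again have a union-type restriction, and one pushes each such $F$ up to a member of $\aaa_{p,r}$ by adding coordinates from $[p]\setminus S$ — each member of $\aaa_{p,r}$ is hit a bounded number of times, the bound controlled by $s$ and the relevant binomial coefficients. Summing over all $j$ and $S$ produces the factor $\sum_{j=0}^{r-1}s^{r-1-j}\binom pj$, and dividing by the number $\binom pr$ of target $r$-sets gives the stated $f(s,p,r)$; requiring $n\ge p+1+(s+f(s,p,r))(k-r)$ guarantees that this bounded-multiplicity image still fits strictly inside $\aaa_{p,r}$ unless $\ff\subset\aaa_{p,r}$ already.

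The main obstacle, I anticipate, is making the injection/bounded-multiplicity argument in the previous paragraph fully rigorous while keeping the dependence on $n$ as sharp as the stated bound: the naive approach of bounding each $\ff_S$ by $\binom{n-p}{k-j}$ is far too lossy, so one must use the $U(s+1,\cdot)$ property \emph{again} on the link $\ff(\bar{[p]})$ and its sublinks to get that these restricted families are themselves small (ideally by induction on $r$ or on $k-r$), and then carefully track how the shift operation lets us "absorb" the $j<r$ pieces into the $j=r$ piece without overcounting by more than the factor encoded in $f(s,p,r)$. Getting the constant exactly right — rather than merely $n>C(s)k$ — is the delicate part; the clean statement $n>s(s+2)k$ of Theorem~\ref{thm1ekr} corresponds to the case $r=1,p=s+1$ (so $f(s,s+1,1)=s(s+1)\cdot\frac{1}{s+1}=s$, giving $n\ge (s+2)+(s+s)(k-1)$, in the right ballpark), and one should sanity-check the general formula against this and against the EMC/ Complete Intersection endpoints.
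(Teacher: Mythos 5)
Your plan starts from the right place --- shifting, Proposition~\ref{propshiftedunion}, and isolating the edges with $|F\cap[p]|<r$ --- but the part you yourself flag as ``the main obstacle'' is exactly the content of the proof, and you have not supplied it. Saying that one should build ``an injection or bounded-multiplicity map'' from $\ff\setminus\aaa_{p,r}$ into $\aaa_{p,r}$ is a heuristic; there is no description of what the map is, why the multiplicity is bounded, or how the $U(s+1,q)$ property enters quantitatively. In fact the paper's argument is not a bijective one at all. It decomposes $\ff$ by the trace $B=F\cap[p+1]$ into the link families $\ff(B)$, proves (i) $\nu(\ff(B))\le s$ for $|B|\le r-1$ (Claim~\ref{cla0}), and (ii) a cross-dependence statement for $(s+1)$-tuples of links $\ff(\{i\}\cup B)$, $B\in\binom{[p+1]}{r-1}$ (Claim~\ref{cla1}); it then feeds these into two imported tools: the shadow inequality $s|\partial\G|\ge|\G|$ for $\nu(\G)\le s$ (Theorem~\ref{thmshadowmatch}) and the weighted cross-dependence Lemma~\ref{lemcd}. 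The factor $\sum_{j=0}^{r-1}s^{r-1-j}\binom pj/\binom pr$ arises from iterating the shadow bound to compare $\sum_{|B|\le r-1}|\ff(B)|$ with $\sum_{|B|=r}|\ff(B)|$, and the $n$-threshold comes from the hypothesis $N\ge(u+s)l$ in Lemma~\ref{lemcd} applied to the carefully chosen $(s+1)$-tuples $\mathcal T(B,z)$. None of these ingredients --- the shadow/matching bound, the cross-dependence lemma, the nested tuple construction, or the double summation that produces exactly $f(s,p,r)$ --- appears in your write-up, so what you have is a correct outline of the \emph{shape} of the proof, not a proof.

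One concrete error worth fixing: your sanity check for Theorem~\ref{thm1ekr} sets $r=1$, $p=s+1$, which violates the constraint $r\le p\le s+r-1$ (i.e.\ $p\le s$ when $r=1$) and moreover does not give $q=k+s(k-1)$. The correct specialization is $r=p=1$, for which $f(s,1,1)=s(s+1)$ and $n\ge 2+s(s+2)(k-1)$, which is indeed implied by $n>s(s+2)k$; your choice gives $f=s$ and a threshold linear in $s$, off by a factor of about $s$.
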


In particular, for $r=1$ and $p=s$ we retrieve the bound on the size of the family as in the Erd\H os Matching Conjecture, while for $r=t$ and $p=0$ we get the bound from the Erd\H os--Ko--Rado theorem.
 Note that  $|\aaa_{p,r}|=\sum_{i=r}^k{p\choose i}{n-p\choose k-i}\sim{p\choose r}{n-p\choose k-r}$ as $n\to \infty$.\vskip+0.1cm

{\bf Remarks. } The function $f(s,p,r)$ looks complicated, which is partially due to the generality of the statement. Let us illustrate it on a few examples. First, if one substitutes $r=1$ and $p=s$, then $f(s,p,r) = s(s+1)\frac 1{s} = s+1$, and we get the bound $n\ge s+1+(2s+1)(k-1)$, exactly the restriction in \cite[Theorem~1.1]{F4}, which guarantees that the Erd\H os Matching Conjecture holds in this range. For $r=1$ and $1\le p\le s$, we get that $f(s,p,r) = s(s+1)/p$, and, roughly speaking, Theorem~\ref{thmsunion} holds for $n\ge 2s^2k/p$.

For $r=p=1$, the theorem implies Theorem~\ref{thm1ekr}. \vskip+0.1cm

{\bf Sharpness. } The aforementioned rough bound $n\ge 2s^2k/p$ is sharp up to a small constant factor: it is not difficult to see that $\mathcal A_{s+p,2}$ is bigger than $\mathcal A_{p,1}$ for $n<cs^2k/p$ with, say, $c=1/4$. This, in particular, tells us that the bound in Theorem~\ref{thm1ekr} is just a constant factor away from the best possible. \vskip+0.1cm

{\bf $\mathbf{k=3}.$ } As we have already mentioned, the EMC is proved for $k=3$ by the first author. In the next section, we obtain some rather strong partial results concerning $m(n,3,s,q)$ for different values of $q$. Probably the most interesting feature of these results is that, depending on the value of $n = n(s)$, we will show that each of the three constructions $\aaa_{p,r}$ available for $k=3$ may be extremal. For precise results, see Theorems~\ref{thmsmallq},~\ref{thmf3},~\ref{thmf2}.\vskip+0.1cm

 {\bf Structure of the paper. } The proof of Theorem~\ref{thmsunion} is given in Section~\ref{secmain}. In the next section, we resolve the problem for some small values of $k,s,q$. Most of the results are devoted to the case $k=3$.

 \section{Results for small $k,s,q$}
\subsection{The complete solution for $k=2$.}
Throughout this section $\ff\subset {[n]\choose 2}$ is a $2$-graph having property $U(s,s+r)$ with $1\le r<s,n>s+r$.
Recall the definition \eqref{defa}. Then $\aaa(r,1,n,2)$ and $\aaa(s+r,2,n,2)$ are $U(s,s+r)$.
\begin{thm} For all values of $n,s,r,$ $n>s+r$, $s>r\ge 1$
\begin{equation}\label{eq41}
  |\ff|\le \max\big\{|\aaa(r,1,n,2)|,|\aaa(s+r,2,n,2)|\big\}.
\end{equation}
\end{thm}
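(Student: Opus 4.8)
The plan is to show that, for a graph, the property $U(s,s+r)$ is governed entirely by the matching number and the number of non-isolated vertices, and then to invoke the Erd\H os--Gallai theorem. Write $V(\ff)$ for the set of non-isolated vertices of $\ff$ and $\nu=\nu(\ff)$; I claim that \emph{$U(s,s+r)$ implies $\nu\le r$ or $|V(\ff)|\le s+r$}. Granting this, the theorem follows at once: if $|V(\ff)|\le s+r$ then $|\ff|\le\binom{|V(\ff)|}2\le\binom{s+r}2=|\aaa(s+r,2,n,2)|$; and if instead $\nu\le r$, then the Erd\H os--Gallai theorem on graphs with no matching of size $r+1$ gives $|\ff|\le\max\bigl\{\binom{2r+1}2,\ \binom r2+r(n-r)\bigr\}$, where $\binom r2+r(n-r)=\binom n2-\binom{n-r}2=|\aaa(r,1,n,2)|$ and $\binom{2r+1}2\le\binom{s+r}2$ because $r\le s-1$. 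In both cases $|\ff|\le\max\{|\aaa(r,1,n,2)|,|\aaa(s+r,2,n,2)|\}$, as required. (Here $n>s+r$ forces $n\ge 2r+1$, so the Erd\H os--Gallai bound is in its standard form.)

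To prove the claim, the key is to compute the largest number $\psi$ of vertices spanned by a choice of $s$ edges of $\ff$. First, $\nu\le s-1$: otherwise $s$ pairwise disjoint edges would span $2s>s+r$ vertices, contradicting $U(s,s+r)$. Next, any subgraph $H$ with $s$ edges is an edge cover of its own vertex set, so by Gallai's identity $|V(H)|=\nu(H)+\rho(H)\le \nu+s$, where $\rho$ denotes the edge-cover number; thus $\psi\le s+\nu$, and trivially $\psi\le|V(\ff)|$. For the matching lower bound, take a maximum matching $M$: by maximality every non-isolated vertex outside $V(M)$ is joined to $V(M)$, so one may enlarge $M$ by pendant edges, adding one fresh vertex at a time, to an $s$-edge subgraph on $\min\{s+\nu,|V(\ff)|\}$ vertices — using $s-\nu$ pendant edges if $|V(\ff)|>s+\nu$, or covering all of $V(\ff)$ and then padding with arbitrary edges of $\ff$ otherwise. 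Hence $\psi=\min\{s+\nu,|V(\ff)|\}$, and (for $|\ff|\ge s$) the property $U(s,s+r)$ is exactly the statement $\psi\le s+r$, i.e. $\nu\le r$ or $|V(\ff)|\le s+r$. The degenerate case $|\ff|<s$ is harmless: then the definition of $U(s,s+r)$ already forces $\bigcup_{F\in\ff}F=V(\ff)$ to have at most $s+r$ elements.

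I do not anticipate a genuine obstacle: the only point requiring care is the identity $\psi=\min\{s+\nu,|V(\ff)|\}$ — specifically the greedy matching-extension argument for the lower bound, together with folding in the trivial range $|\ff|<s$. It is perhaps worth remarking that this is exactly what one should expect, since the present statement is the ``$U(s,s+r)$'' refinement of the $k=2$ case of the Erd\H os Matching Conjecture, and the argument above reduces it, through the matching number, precisely to that case (i.e. to the Erd\H os--Gallai theorem).
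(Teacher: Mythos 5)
Your proof is correct and follows essentially the same route as the paper's: establish the dichotomy ``$\nu(\ff)\le r$ or $|V(\ff)|\le s+r$'' and then, in the nontrivial case, invoke the Erd\H os--Gallai theorem, noting $\binom{2r+1}{2}\le\binom{s+r}{2}$ since $r<s$. The only difference is cosmetic: the paper proves the dichotomy by directly greedily extending a size-$(r+1)$ matching with $s-r-1$ further edges to contradict $U(s,s+r)$, whereas you package the same greedy extension through the exact identity $\psi=\min\{s+\nu,|V(\ff)|\}$ (using Gallai's $|V(H)|=\nu(H)+\rho(H)$ for the upper bound), which is a slightly more systematic way of saying the same thing and also handles the $|\ff|<s$ edge case cleanly.
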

The case $r=s-1$ of the theorem is a classical result of Erd\H os and Gallai, determining the maximum number of edges in a graph without $s$ pairwise disjoint edges.
\begin{proof}
Let $\cup \ff:=\cup_{F\in\ff} F$. In the case $|\cup \ff|\le s+r$, we have $|\ff|\le {s+r\choose 2}$. From now on we suppose that $|\cup \ff|>s+r$.
\begin{cla}
  $\nu(\ff)\le r$.
\end{cla}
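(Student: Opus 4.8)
The plan is to argue by contradiction. Assuming $\nu(\ff)\ge r+1$, I would fix $r+1$ pairwise disjoint edges $M_1,\dots,M_{r+1}\in\ff$, whose union has size exactly $2r+2$ (each is a $2$-set and they are disjoint), and then try to enlarge this collection while keeping the union growing, aiming either to exhibit $s$ edges with union of size $>s+r$ (contradicting $U(s,s+r)$) or to conclude that $\ff$ itself spans few vertices (contradicting the standing case hypothesis $|\cup\ff|>s+r$).

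Concretely, I would extend $M_1,\dots,M_{r+1}$ greedily: while fewer than $s$ edges have been selected and some $F\in\ff$ is not contained in the union of those already selected, add such an $F$; each such addition increases the size of the union by at least one. The process terminates in one of two ways. If it produces $s$ edges $M_1,\dots,M_s$, then $|M_1\cup\dots\cup M_s|\ge 2(r+1)+\bigl(s-(r+1)\bigr)=s+r+1$, contradicting property $U(s,s+r)$. Otherwise it halts with some $j<s$ edges because every edge of $\ff$ lies in $W:=M_1\cup\dots\cup M_j$; then $\cup\ff=W$, and evaluating the $U(s,s+r)$ property on the $s$-tuple obtained from $M_1,\dots,M_j$ by repeating one entry until the length is $s$ yields $|\cup\ff|=|W|\le s+r$, contradicting $|\cup\ff|>s+r$. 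Either outcome is a contradiction, so $\nu(\ff)\le r$.

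The single point requiring a little care is the use of a tuple with repeated entries in the second case: one cannot in general pad the selected edges to $s$ \emph{distinct} members of $\ff$, because $\ff$ may contain fewer than $s$ edges while still covering more than $s+r$ points; but the definition of $U(s,q)$ is stated for arbitrary choices $F_1,\dots,F_s\in\ff$, repetitions allowed, so this is legitimate. Apart from this bookkeeping I do not expect a genuine obstacle — the claim is a short structural consequence of $U(s,s+r)$ together with the hypothesis $|\cup\ff|>s+r$, and once it is in hand one knows $\nu(\ff)\le r$ and can proceed with an intersecting-type analysis of $\ff$.
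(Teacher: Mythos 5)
Your proof is correct and uses essentially the same idea as the paper: start from a matching of size $r+1$ (union of size $2(r+1)$) and, exploiting $|\cup\ff|>s+r$, extend to $s$ edges of $\ff$ with union at least $s+r+1$, contradicting $U(s,s+r)$. The paper picks the extra edges in one step by choosing $s-r-1$ uncovered vertices, whereas you run a greedy extension with an explicit case split (including the padding-by-repetition case), but this is just a more verbose version of the same argument.
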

\begin{proof}
  Indeed, if we can find $F_1,\ldots, F_{r+1}$ with $|F_1\cup \ldots \cup F_{r+1}| = 2(r+1)$, then $|\cup \ff|\ge s+r+1$ enables us to find $s-r-1$ edges $F_{r+2},\ldots, F_s$ such that $|F_1\cup\ldots \cup F_s|\ge |F_1\cup \ldots\cup F_{r+1}|+s-r-1 = s+r+1$. This contradicts $U(s,s+r)$.
\end{proof}
Applying the Erd\H os--Gallai theorem \cite{EG} (for a short proof, see \cite{AF} or the next section) to $\ff$ yields $$|\ff|\le \max\Big\{{2r+1\choose 2},{n\choose 2}-{n-r\choose 2}\Big\}$$ and proves \eqref{eq41}.
\end{proof}
\subsection{The case $q\le 2s+k-3$} Let us first state a very simple result.
\begin{cla}
 $ m(n,k,s,q) = {q\choose k}$ for $k\le q\le k+s-2$.
\end{cla}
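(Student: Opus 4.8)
The plan is to show that any $\ff\subset{[n]\choose k}$ with property $U(s,q)$ and $q\le k+s-2$ must in fact satisfy $|\cup\ff|\le q$, where $\cup\ff:=\bigcup_{F\in\ff}F$. Once this is established, the bound $|\ff|\le{|\cup\ff|\choose k}\le{q\choose k}$ is immediate, and it is attained by ${[q]\choose k}$, which has property $U(s,q)$ since the union of any sets drawn from it is contained in $[q]$; since $n>q$ by the standing hypothesis on $m(n,k,s,q)$, this family really does live inside ${[n]\choose k}$, so the matching lower bound needs nothing further.

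To prove $|\cup\ff|\le q$, I would argue by contradiction. Assume $|\cup\ff|\ge q+1$ and build greedily a sequence $F_1,F_2,\ldots\in\ff$ whose partial unions grow by at least one new element at each step. Pick an arbitrary $F_1\in\ff$, so $|F_1|=k$. Having chosen $F_1,\ldots,F_j$ with union $U_j$, if $|U_j|<|\cup\ff|$ choose an element $x\in(\cup\ff)\setminus U_j$ and any $F_{j+1}\in\ff$ with $x\in F_{j+1}$; then $|U_{j+1}|\ge|U_j|+1$. A short induction then yields $|U_j|\ge\min(k+j-1,\ |\cup\ff|)$ for every $j$. Taking $j=s$ and using $|\cup\ff|\ge q+1$ together with the inequality $q+1\le k+s-1$ (this is exactly where $q\le k+s-2$ enters), we get $|F_1\cup\cdots\cup F_s|=|U_s|\ge\min(k+s-1,\ |\cup\ff|)\ge q+1$, contradicting property $U(s,q)$. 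If the greedy process stabilizes early, i.e. $U_j=\cup\ff$ for some $j<s$, we simply repeat the last set to pad the sequence to length $s$; this is legitimate because $U(s,q)$ imposes no distinctness requirement on $F_1,\ldots,F_s$.

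The only mildly delicate point, and the step I would write out most carefully, is the inductive bookkeeping for $|U_j|\ge\min(k+j-1,|\cup\ff|)$: one must check that the greedy augmentation is available precisely as long as $U_j\ne\cup\ff$, and that the numerology is invoked in the right spot, namely that $q+1\le k+s-1$ so that $s$ edges already suffice to overshoot $q$. Everything else is routine: the final bound follows from $\ff\subseteq{\cup\ff\choose k}$.
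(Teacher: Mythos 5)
Your proposal is correct and follows essentially the same route as the paper: the paper states that if $|X|>q$ for $X:=\cup_{F\in\ff}F$ one can "easily find" $F_1,\ldots,F_s$ with $|F_1\cup\cdots\cup F_s|\ge\min\{k+s-1,|X|\}>q$, and your greedy construction with the padding-by-repetition observation is precisely the expansion of that step. The only addition is your explicit remark that ${[q]\choose k}$ realizes the bound, which the paper leaves implicit.
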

\begin{proof}
  Let $\ff\subset {[n]\choose k}$ have property $U(s,q)$. Consider $X:=\cup_{F\in\ff}F.$
  If $|X|>q$ holds then we can easily find $F_1,\ldots, F_s$ satisfying $|F_1\cup \ldots \cup F_s|\ge \min\{k+s-1,|X|\}>q$, a contradiction. Thus $|\ff|\le {|X|\choose k}\le {q\choose k},$ as desired.
\end{proof}

\begin{thm}\label{thmsmallq}
  Let $\ff\subset {[n]\choose k}$ be $U(s,s+t+k-3)$ for $t\in[2,s]$. Then
  $$|\ff|\le \max\big\{\aaa(s+t+k-3,k,n,k),\aaa(t+k-3,k-1,n,k)\big\}.$$
\end{thm}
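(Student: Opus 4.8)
The plan is to reduce to shifted families and then run a case analysis on which "critical" edges of $\aaa(s+t+k-3,k,n,k)$ and $\aaa(t+k-3,k-1,n,k)$ are present, exactly in the spirit of the (commented-out) $k=3$, $q=2s$ argument. First I would assume $\ff$ is shifted; this is legitimate because shifting preserves the $U(s,q)$ property (a union of $s$ edges can only shrink under a shift, since $\cup F_i$ after shifting is contained in a shift of $\cup F_i$). Set $q=s+t+k-3$. If $|\cup\ff|\le q$ we are immediately done with $|\ff|\le\binom{q}{k}=|\aaa(q,k,n,k)|$, so assume $|\cup\ff|\ge q+1$, i.e. some element $\ge q+1$ lies in an edge of $\ff$.

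Next I would identify the obstruction set. Consider the $s$ sets $B_\ell:=\{1,\dots,k-2,\ k-1+\ell,\ q+2-\ell\}$ for $\ell=1,\dots,s$ (so the first $k-2$ coordinates are fixed at $[k-2]$, and the last two coordinates range over complementary positions in $\{k-1,\dots,q+1\}$). Their union is $[q+1]$, which has size $q+1>q$; hence \emph{not all} of them can be in $\ff$, so there is some $\ell\in[s]$ with $B_\ell\notin\ff$. Because $\ff$ is shifted and $B_\ell\notin\ff$, no set $G$ with $B_\ell\prec_s G$ lies in $\ff$; this forces every edge of $\ff$ to be "small" in one of two ways — either $G\subset[q+1-\ell]$ (its two largest elements are both below $q+2-\ell$), or $G$ has at least $k-1$ of its elements in $[k-1+\ell-1]=[k+\ell-2]$. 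In other words $\ff\subset\binom{[q+1-\ell]}{k}\ \cup\ \aaa(k+\ell-2,k-1,n,k)$, giving
$$|\ff|\ \le\ \binom{q+1-\ell}{k}+\Big(|\aaa(k+\ell-2,k-1,n,k)|-\binom{k+\ell-2}{k}\Big)=:g(\ell),$$
where I subtract $\binom{k+\ell-2}{k}$ to avoid double-counting edges contained in $[k+\ell-2]\subset[q+1-\ell]$ (valid since $\ell\le s$ forces $k+\ell-2\le q+1-\ell$ when, say, $t\ge 2$; I'd check this inequality carefully). To pin down the range of $\ell$ I use that $\ff$ is not contained in either extremal family: $\ff\not\subset\aaa(q,k,n,k)$ gives an edge forcing $\ell\ge 2$ roughly, and $\ff\not\subset\aaa(t+k-3,k-1,n,k)$ gives an edge forcing $\ell\le s-1$ — though actually it is cleaner to just note that $g(1)$ and $g(s)$ are precisely (or bounded by) the two claimed extremal sizes, and handle the boundary cases $\ell=1,s$ directly.

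Finally I would show $g(\ell)$ is convex in $\ell$, so $\max_{1\le\ell\le s}g(\ell)=\max\{g(1),g(s)\}$, and check that $g(1)\le|\aaa(s+t+k-3,k,n,k)|$ and $g(s)\le|\aaa(t+k-3,k-1,n,k)|$ (the latter because at $\ell=s$ the term $\binom{q+1-s}{k}=\binom{t+k-3}{k}$ merges into the $\aaa$ count). For convexity: $\binom{q+1-\ell}{k}$ is convex in $\ell$ (with the convention $\binom{x}{k}=0$ for $x<k$, $\binom{x}{k}$ is convex), and $|\aaa(k+\ell-2,k-1,n,k)|-\binom{k+\ell-2}{k}=\sum_{i\ge k-1}\binom{k+\ell-2}{i}\binom{n-k-\ell+2}{k-i}-\binom{k+\ell-2}{k}$; the dominant term $\binom{k+\ell-2}{k-1}(n-k-\ell+2)$ is of the form (binomial in $\ell$)$\cdot$(linear in $\ell$), and I expect the whole expression to be convex in $\ell$ for $n$ large — this routine but slightly fiddly convexity check is the one place that needs care, and it is the main (mild) obstacle; for $k=3$ it is exactly the computation already carried out in the commented-out argument above. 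With convexity in hand the theorem follows.
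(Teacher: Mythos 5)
Your high-level plan is the same as the paper's: reduce to shifted $\ff$, exhibit a family of $s$ sets whose union exceeds $q=s+t+k-3$, conclude that one of them is missing, translate that into a structural constraint via shiftedness, and finish with convexity. However, your obstruction family is wrong, and the error is structural rather than cosmetic.

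You take $B_\ell=\{1,\dots,k-2,\ k-1+\ell,\ q+2-\ell\}$ for $\ell=1,\dots,s$. First, the two free coordinates sum to $q+k+1$, so they are complementary in $\{k,\dots,q+1\}$, not in $\{k-1,\dots,q+1\}$ as you claim; in particular the element $k-1$ is never in $\bigcup_\ell B_\ell$, so the union is contained in $[q+1]\setminus\{k-1\}$ and has size at most $q$, not $q+1$. Second, once $2\ell\ge q-k+3=s+t$ the two free coordinates collide or cross (e.g.\ $k=3$, $s=5$, $t=3$ gives $B_4=\{1,6,6\}$), so for $s>t$ many of your $B_\ell$ are not $k$-sets at all. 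Thus ``one of the $B_\ell$ is missing from $\ff$'' is not forced by property $U(s,q)$. Relatedly, your attempt to restrict $\ell$ to $[2,s-1]$ fails: $\ff\not\subset\aaa(q,k,n,k)$ gives $(1,\dots,k-1,q+1)\in\ff$, but this set does not precede any of your $B_\ell$ in $\prec_s$ (its $(k-1)$-st coordinate is $k-1$, smaller than the $(k-1)$-st coordinate $k-1+\ell$ of every $B_\ell$), so it rules out none of them; consequently $g(1)=\binom{q}{k}+n-k+1$ is strictly larger than $|\aaa(q,k,n,k)|$ and you cannot close the argument at the boundary.

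The paper's obstruction family avoids both problems by mixing two types of sets: $t$ sets $(1,\dots,k-2,\,i+k-2,\,2t+1-i+k-2)$ for $i\in[t]$ (free coordinates summing to the smaller constant $2t+2k-3$, all inside $\{k-1,\dots,2t+k-2\}$, with the $i=1$ set hitting $k-1$) and $s-t$ ``tail'' sets $(1,\dots,k-1,j)$ for $j\in[2t+k-1,\,s+t+k-2]$; their union is exactly $[q+1]$. Then $\ff\not\subset\aaa(s+t+k-3,k)$ forces every $(1,\dots,k-1,j)$, $k\le j\le s+t+k-2$, into $\ff$ (killing all tail sets and $i=1$), and $\ff\not\subset\aaa(t+k-3,k-1)$ forces the $i=t$ set into $\ff$, so the missing set is one of the middle ones $i\in[2,t-1]$, and the resulting constraint has the correct endpoint values. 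Finally, your convexity sketch needs the same care the paper takes: the linear factor $n-k-\ell+2$ decreases in $\ell$, so one should verify \emph{integer} convexity directly by freezing that factor and invoking integer convexity of $\binom{x}{k-1}$, rather than asserting convexity of the product. That last point is a fixable gap, but the obstruction family must be replaced before the rest of the argument can be salvaged.
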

Let us note that this theorem for $k=2$ includes the aforementioned Erd\H os-Gallai result, as well as the result from the previous subsection. The proof is a generalization of the proof in \cite{AF}.
\begin{proof}
  W.l.o.g. assume that $\ff$ is shifted. Consider the collection of sets $\big\{(1,\ldots, k-2,i+k-2,2t+1-i+k-2): i\in [t]\big\}\cup\big\{(1,\ldots, k-1,j): j\in [2t+k-1, s+t+k-2]\big\}$. These are $t+(s-t)=s$ sets in total. Moreover, their union is $[s+t+k-2]$, and, therefore, one of these sets is not in $\ff$.
\begin{itemize}\item Since $\ff\not\subset\aaa(s+t+k-3,k,n,k)$ we get  $(1,\ldots, k-1,s+t+k-2)\in\ff$, which implies that all sets of the form $(1,\ldots,k-1,t')$, $k\le t'\le s+t+k-2$ are in $\ff$;
\item since $\ff\not \subset \aaa(t+k-3,k-1,n,k)$ we get $(1,\ldots, k-2, t+k-2,t+k-1)\in \ff$.
\end{itemize}
Therefore, one of the sets $(1,\ldots, k-2,\ell+k-2,2t+1-\ell+k-2)$ for some $\ell\in [2,t-1]$ is missing.
This implies that $\ff\subset {[2t-\ell+k-2]\choose k}\cup {[\ell+k-3]\choose k-1}\times {[2t-\ell+k-1,n]\choose 1}.$ Thus
$$|\ff|\le {2t-\ell+k-2\choose k}+{\ell+k-3 \choose k-1}(n-2t+\ell-k+2)=:f(\ell).$$
We note that $f(t) = |\aaa(t+k-3,k-1,n,k)|$, while $f(1)={2t+k-3\choose k}\le |\aaa(s+t+k-3,k,n,k)|$.
Therefore, to conclude the proof of the theorem, it suffices to show that $f(\ell)$ is an integer convex function for $\ell\in[t]$. (Recall that $g(x)$ is an {\it integer convex function} on a certain interval if $2g(x)\le g(x-1)+g(x+1)$ on that interval.) Note that if a function on an interval is convex then it is integer convex on the same interval.

Let us note that $g'(-x) = -g'(x)$, yielding $g''(-x) = g''(x)$.
 The function ${x\choose k}$ is convex for $x\ge k$, and thus the first term ${2t-\ell+k-2\choose k}$ is convex for $\ell\in [t]$. The second term $f_1(\ell)$ in the definition of $f(\ell)$ is integer convex. Indeed, for any $\ell \in [t]$, we have $f_1(\ell-1)+f_1(\ell+1)\ge \big({\ell-1+k-3\choose k-1}+{\ell+1+k-3\choose k-1}\big)(n-2t+\ell-k+2)$, which is bigger than $2{\ell+k-3\choose k-1}(n-2t+\ell-k+2)$ due to integer convexity of ${x\choose k-1}$ for $x\in \mathbb Z$.
 We conclude that $f(\ell)$ is integer convex for $\ell \in [t]$.
 \end{proof}

\subsection{The case $k=3$, $q=2s+1$}\label{sec2s+1} For $k=2$, (each of) the theorems from the previous two sections resolve the problem completely. For $k=3$, Theorem~\ref{thmsmallq} covers the cases with $q\le 2s$. Thus, the case mentioned in the title of this section is the ``first'' case, not covered by Theorem~\ref{thmsmallq}. As we would see, the situation changes quite significantly: instead of having two potential extremal families, we shall have three.

Consider a family $\ff\subset {[n]\choose 3}$ that is $U(s,2s+1)$. There are three natural candidates for extremal families here (cf. \eqref{defa}):

$$\ff_1:=\aaa(1,1,n,3), \ \ \ \ff_2:=\aaa(s+1,2,n,3), \ \ \ \ff_3:=\aaa(2s+1,3,n,3).$$
In particular, $\ff_1$ contains all sets that contain $1$ and $\ff_3:={[2s+1]\choose 3}$. It is easy to see that all three families are $U(s,2s+1)$. Let us make the following conjecture.

\begin{gypo}\label{conj3}
  If $\ff\subset {[n]\choose 3}$ is $U(s,2s+1)$ then $|\ff|\le \max_{i\in[3]}|\ff_i|$.
\end{gypo}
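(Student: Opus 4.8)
The plan is to run the standard shifting machinery and then to play off the link of the element $1$ against the family obtained by deleting $1$. As usual, one first checks that the property $U(s,q)$ is preserved by the shifts $S_{ij}$ --- the only point requiring care is that a set $G\in S_{ij}(\ff)$ with $j\in G$ and $i\notin G$ satisfies $(G\setminus j)\cup i\in\ff$, which is exactly what allows one to replace such sets, together with the ``moved'' ones, when estimating a union --- so it suffices to bound $|\ff|$ for shifted $\ff$. Then Proposition~\ref{propshiftedunion}, applied with $k=3$, $r=p=1$, $q=2s+1$, gives the skeleton $\ff\subseteq\ff_1\cup\ff_2\cup\ff_3$. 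Write $|\ff|=|\ff(1)|+|\ff(\bar 1)|$, where $\ff(1)$ is the link of $1$ (a graph on $\{2,\dots,n\}$) and $\ff(\bar 1)=\{F\in\ff:1\notin F\}$. If $\ff(\bar1)=\emptyset$ then $\ff\subseteq\ff_1$ and we are done. If $|\ff(1)|=\binom{n-1}{2}$, i.e.\ $\ff\supseteq\ff_1$, then in fact $\ff=\ff_1$: any $F\in\ff$ with $1\notin F$, together with $s-1$ triples through $1$ whose pairs are pairwise disjoint and disjoint from $F$ (such triples lie in $\ff_1\subseteq\ff$ because $n\ge 2s+2$), would constitute $s$ members of $\ff$ with union of size $1+3+2(s-1)=2s+2>2s+1$. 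So from now on $\ff(\bar1)\neq\emptyset$ --- hence $\{2,3,4\}\in\ff$ by shiftedness --- and $|\ff(1)|<\binom{n-1}{2}$.

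Next I would pin down $\ff(\bar 1)$. The key observation is that $\ff(\bar 1)$, viewed as a $3$-graph on $\{2,\dots,n\}$, has property $U(s,2s)$: if $F_1,\dots,F_s\in\ff(\bar1)$ had $|F_1\cup\dots\cup F_s|\ge 2s+1$, this union would have size exactly $2s+1$ by $U(s,2s+1)$, and since $2s+1<3s$ some element $x\ge 2$ must lie in two of the $F_i$; replacing $x$ by $1$ in one of those two sets produces a set still in $\ff$ (it is $\prec_s$ the original) whose union with the remaining sets contains the old union together with the new element $1$, and hence has size $\ge 2s+2$, contradicting $U(s,2s+1)$. Therefore Theorem~\ref{thmsmallq}, in the case $k=3$, $t=s$ (over a ground set of size $n-1$), applies and gives $|\ff(\bar1)|\le\max\big\{\binom{2s}{3},\ \binom{s}{3}+\binom{s}{2}(n-1-s)\big\}$ --- the first option corresponding to $\ff(\bar1)\subseteq\binom{[2,2s+1]}{3}$ (an ``$\ff_3$-type'' $\ff$), the second to $\ff(\bar1)$ being essentially $\aaa(s,2,\cdot,3)$ (an ``$\ff_2$-type'' $\ff$).

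The crux, and the main obstacle, is that merely adding $|\ff(1)|\le\binom{n-1}{2}$ and this bound overshoots $\max(|\ff_1|,|\ff_2|,|\ff_3|)$, so one must quantify how the presence of ``spread-out'' triples avoiding $1$ forces $\ff(1)$ to shrink. For instance, since $\{2,3,4\}\in\ff$, the family cannot contain all of $\{1,2j+3,2j+4\}$ for $j=1,\dots,s-1$ (their union with $\{2,3,4\}$ is $[2s+2]$); if $j_0$ is the least $j$ with $\{1,2j_0+3,2j_0+4\}\notin\ff$, then shiftedness confines every edge of $\ff(1)$ to have its smaller endpoint at most $2j_0+2$, so $|\ff(1)|\le\sum_{x=2}^{2j_0+2}(n-x)$, while a companion parameter controls how far $\ff(\bar1)$ reaches toward $[s+1]$ and toward $[2s+1]$ and bounds $|\ff(\bar1)|$ accordingly. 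It then remains to optimise: over all admissible values of these parameters, show $|\ff(1)|+|\ff(\bar1)|\le\max(|\ff_1|,|\ff_2|,|\ff_3|)$, with the extreme parameter values reproducing the three candidates ($\ff(\bar1)$ trivial $\rightsquigarrow\ff_1$; $\ff(\bar1)$ of $\aaa(s,2,\cdot,3)$-type with $\ff(1)$ a near-matching $\rightsquigarrow\ff_2$; everything inside $[2s+1]$ $\rightsquigarrow\ff_3$), which one would hope to reduce to the endpoints by convexity of the relevant binomial sums. I expect this optimisation to be the genuinely delicate part, since the reach of $\ff(\bar1)$ toward $[s+1]$ and toward $[2s+1]$ are controlled by different quantities, each coupled to $\ff(1)$ in a different way, so no single clean convex estimate presents itself; accordingly I would expect a complete proof to be organised by the three ranges of $n$ --- near $2s+1$ (where $\ff_3$ is extremal), an intermediate range (where $\ff_2$ is), and $n$ large, say $n\ge 2s^2$ (where $\ff_1$ is, already via Theorem~\ref{thmsunion} applied with $s$ replaced by $s-1$ and $r=p=1$) --- with the two transitions $\ff_3\leftrightarrow\ff_2$ and $\ff_2\leftrightarrow\ff_1$ the hard boundary cases; this is presumably why only the extreme regimes are settled (Theorems~\ref{thmf3},~\ref{thmf2}).
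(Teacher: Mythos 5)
The statement you set out to prove is Conjecture~\ref{conj3}, which the paper does \emph{not} prove. The authors establish it only in restricted regimes: Theorem~\ref{thmf3} gives $|\ff|\le|\ff_3|$ for $n\le 3s$, $s\ge 10$; Theorem~\ref{thmf2} with $t=1$ gives $|\ff|\le|\ff_2|$ for $5(s+1)\le n\le (s+1)^2/3$; Theorem~\ref{thmsunion} gives $|\ff|\le|\ff_1|$ once $n$ is of order $s^2$; and Theorem~\ref{thmu37} settles the single case $s=3$ for all $n$. The transitional ranges of $n$ are left open. So there is no ``paper's own proof'' of the conjecture to measure you against, and you are right to describe what you have as a plan rather than a proof.

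That said, your plan contains a genuinely useful structural observation that the paper never makes explicit: for shifted $\ff$ satisfying $U(s,2s+1)$, the subfamily $\ff(\bar 1)$ on $\{2,\dots,n\}$ is in fact $U(s,2s)$. Your argument for this is correct --- if $s$ sets avoiding $1$ have union of size exactly $2s+1<3s$, pigeonhole forces a repeated vertex $x\ge 2$, and swapping $x$ for $1$ in one of those sets (allowed by shiftedness) pushes the union to size $2s+2$ --- and it makes Theorem~\ref{thmsmallq} applicable to $\ff(\bar 1)$. This is a different decomposition from the one the paper uses in its partial results: Theorem~\ref{thmf3} works with $\ff\subset\G_1\cup\G_2\cup\ff_3$, where $\G_1=\ff_1\setminus(\ff_2\cup\ff_3)$ and $\G_2=\ff_2\setminus\ff_3$, and controls $|\ff\cap\ff_3|$ via the $k=3$ case of the Erd\H os Matching Conjecture; Theorem~\ref{thmf2} pivots on an explicit list of $s$ sets $A_i,B_j$ at least one of which must be missing, then splits into cases. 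Neither uses the $\ff(1)/\ff(\bar 1)$ split you propose.

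The gap in your proposal is precisely the one you flag: after the split, adding $|\ff(1)|\le\binom{n-1}{2}$ to the Theorem~\ref{thmsmallq} bound on $|\ff(\bar 1)|$ overshoots each of the three targets, and the joint optimisation over your parameter $j_0$ (controlling $\ff(1)$) and the ``reach'' parameters of $\ff(\bar1)$ is not carried out. This is not a cosmetic omission --- the two quantities are coupled in incompatible ways in different ranges of $n$, which is exactly why the paper handles the three regimes by three different arguments and leaves the boundary ranges as a conjecture. So your proposal does not prove the statement, but the paper does not prove it either; your $U(s,2s)$ reduction is sound and would be a reasonable ingredient in any future attempt, while the hard optimisation it leads to remains open.
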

Unlike in the case of the Erd\H os Matching Conjecture, each of the three families is the largest for each $s\ge 3$ in a certain interval depending on $n$. Let us show that. We have
$$|\ff_1\setminus \ff_2| = {n-s-1\choose 2},\ \ \ \ |\ff_2\setminus \ff_1| = {s\choose 2}{n-s-1\choose 1}+{s\choose 3},$$
therefore, $|\ff_1|\ge |\ff_2|$ for roughly $n\ge s^2$, and smaller otherwise. Next, we have
$$|\ff_2\setminus \ff_3| = {s+1\choose 2}{n-2s-1\choose 1}, \ \ \ \ |\ff_3\setminus \ff_2| = {s\choose 3}+{s\choose 2}{s+1\choose 1},$$
and $|\ff_3|\ge|\ff_2|$ iff $3(s+1)(n-3s)\le (s-1)(s-2)$. For large $s$, this happens roughly for $n\le \frac {10}3 s$.
\begin{thm}\label{thmf3} Assume that $\ff\subset {[n]\choose 3}$ is $U(s,2s+1)$ and, moreover, $n\le 3s$ and $s\ge 10$. Then $$|\ff|\le |\ff_3|.$$
\end{thm}
Let us also note that Theorem~\ref{thmsunion} gives $|\ff|\le |\ff_1|$ for $n\ge 2s^2+4s+2$. In Section~\ref{secf2}, we will show (in a more general setting) that in a certain range $|\ff|\le |\ff_2|$ for any $\ff$ that is $U(s,2s+1)$.

\begin{proof} Let $\ff\subset {[n]\choose 3}$ be shifted and $U(s,2s+1)$, $|\ff|>\max |\ff_i|$, $i=1,2,3$. In particular, $(1,2,2s+2)\in \ff$, $(2,3,4)\in \ff$.

Consider , $\G_2:=\ff_2\setminus \ff_3$, $\G_1:=\ff_1\setminus(\ff_2\cup \ff_3)$.  Then
\begin{align*}
  \G_2 =& \{(a,b,c): b\le s+1, 2s+1<c\le n\},\\
  \G_1 =& \{(1,b,c): b\ge s+2,c\ge 2s+2\}.
\end{align*}
Recall that Proposition~\ref{propshiftedunion} implies that
$\ff\subset \ff_1\cup \ff_2\cup \ff_3$ and thus also
$$\ff\subset \G_1\cup \G_2\cup \ff_3.$$

\begin{cla}\label{cla11}
  $(1,s+3,2s+2)\notin\ff$. Consequently, $|\G_1\cap \ff|\le n-2s-1$.
\end{cla}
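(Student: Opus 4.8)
The plan is to establish $(1,s+3,2s+2)\notin\ff$ by contradiction and then read off the bound on $|\G_1\cap\ff|$ directly from shiftedness. Suppose $(1,s+3,2s+2)\in\ff$. Since $\ff$ is shifted, this forces every triple $(1,b,c)$ with $2\le b\le s+3$ and $b<c\le 2s+2$ to lie in $\ff$. Together with $(2,3,4)\in\ff$ (which we already know, since $\ff\not\subset\ff_1$), I want to produce $s$ members of $\ff$ whose union has size $2s+2$, contradicting property $U(s,2s+1)$.

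The construction: set $G_0:=(2,3,4)$ and, for $i=1,\dots,s-1$, set $G_i:=(1,\,i+4,\,s+3+i)$. For $i\ge 1$ the two coordinates of $G_i$ other than $1$ satisfy $i+4\le s+3$ and $s+3+i\le 2s+2$, so $G_i\prec_s (1,s+3,2s+2)$ and hence $G_i\in\ff$; also $G_0\in\ff$. These are $s$ sets, and since $\{i+4:1\le i\le s-1\}=\{5,\dots,s+3\}$ and $\{s+3+i:1\le i\le s-1\}=\{s+4,\dots,2s+2\}$, we get $G_0\cup G_1\cup\dots\cup G_{s-1}=\{2,3,4\}\cup\{1\}\cup\{5,6,\dots,2s+2\}=[2s+2]$, of size $2s+2>2s+1=q$. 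This contradicts $U(s,2s+1)$, so $(1,s+3,2s+2)\notin\ff$.

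For the consequence, recall $\G_1=\{(1,b,c):s+2\le b<c,\ c\ge 2s+2\}$. If some $(1,b,c)\in\G_1\cap\ff$ had $b\ge s+3$, then $(1,s+3,2s+2)\prec_s(1,b,c)$, and shiftedness would put $(1,s+3,2s+2)$ into $\ff$, which we just excluded; hence $b=s+2$. Therefore $\G_1\cap\ff\subseteq\{(1,s+2,c):2s+2\le c\le n\}$, a set of size $n-2s-1$, which gives $|\G_1\cap\ff|\le n-2s-1$.

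The argument is short throughout; the only point requiring attention is the choice of the $s-1$ ``star'' triples $G_1,\dots,G_{s-1}$: each must be forced into $\ff$ by shiftedness from $(1,s+3,2s+2)$ — so both of its coordinates other than $1$ must stay in the windows $[2,s+3]$ and $[3,2s+2]$ — while collectively, together with $G_0=(2,3,4)$, they must cover all of $[2s+2]$ with no wasted repetition. Splitting $\{5,\dots,2s+2\}$ into the $s-1$ pairs $\{i+4,\ s+3+i\}$ achieves exactly this, and this bit of bookkeeping is essentially the whole content of the proof.
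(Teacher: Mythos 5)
Your proof is correct and is essentially the paper's own argument: your sets $G_i=(1,i+4,s+3+i)$ for $i=1,\dots,s-1$ are exactly the paper's sets $(1,s+3-\ell,2s+2-\ell)$ for $\ell=0,\dots,s-2$ (substitute $\ell=s-1-i$), and both proofs combine them with $(2,3,4)$ to cover $[2s+2]$ and contradict $U(s,2s+1)$, then read off $b=s+2$ for the bound on $|\G_1\cap\ff|$.
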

\begin{proof}
  Otherwise, $(1,s+3-\ell,2s+2-\ell)\in \ff$ for $\ell = 0,1,\ldots, s-2$. Together with $(2,3,4)$ these are $s$ sets with  union $[2s+2]$. As for the second part, only sets from $\G_1$ with $b=s+2$ may be included in $\ff$.
\end{proof}

{\bf Remark.} Should $(2,3,s+3)\in \ff$, $\G_1\cap \ff = \emptyset$ would follow in a similar way.

\begin{cla}
  We have $\nu(\ff)\le \frac {s+1}2$.
\end{cla}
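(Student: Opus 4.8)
The plan is to argue by contradiction: suppose $\nu(\ff)=m$ with $2m\ge s+2$ (so $m>\frac{s+1}{2}$), fix pairwise disjoint $G_1,\ldots,G_m\in\ff$, and produce $s$ edges of $\ff$ whose union has more than $2s+1$ elements, contradicting the $U(s,2s+1)$ property. I first record two easy facts: (i) $m\le s-1$, since $s$ pairwise disjoint triples would already span $3s>2s+1$ vertices; and (ii) from $(1,2,2s+2)\in\ff$ and shiftedness, $(1,2,c)\in\ff$ for every $c$ with $3\le c\le 2s+2$.

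The only substantial ingredient is the standard fact that, $\ff$ being shifted, a size-$m$ matching in $\ff$ can be chosen with union exactly the initial segment $[3m]$. To get this, among all size-$m$ matchings in $\ff$ take one, $\{G_1,\ldots,G_m\}$, minimizing $\sum_{i}\sum_{x\in G_i}x$. If $\cup_iG_i\ne[3m]$, this $3m$-element set misses some $j\in[3m]$ and contains some $y>3m$, say $y\in G_\ell$; then $G_\ell':=(G_\ell\setminus\{y\})\cup\{j\}$ satisfies $G_\ell'\prec_s G_\ell$, so $G_\ell'\in\ff$, while $G_\ell'$ is still disjoint from the other $G_i$ (as $j\notin\cup_iG_i$) and has strictly smaller element-sum — contradicting minimality. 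Hence we may assume $G_1\cup\ldots\cup G_m=[3m]$.

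Now split according to whether $3m\ge 2s+2$ or $3m\le 2s+1$. If $3m\ge 2s+2$, then $G_1,\ldots,G_m$ together with $s-m\ge 0$ arbitrary repeated edges are $s$ edges of $\ff$ whose union contains $[3m]$, hence has size at least $2s+2>2s+1$ — contradiction. If $3m\le 2s+1$, set $t:=2s+2-3m\ge 1$ and consider the edges $G_1,\ldots,G_m,(1,2,3m+1),(1,2,3m+2),\ldots,(1,2,2s+2)$, all in $\ff$ by (ii); their union is $[3m]\cup\{3m+1,\ldots,2s+2\}=[2s+2]$. There are $m+t=2s+2-2m\le s$ of them (here $2m\ge s+2$ is used), so padding with repeated edges gives $s$ edges of $\ff$ with union $[2s+2]$, again contradicting $U(s,2s+1)$. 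Every value of $3m$ falls into one of the two cases, so $2m\ge s+2$ is impossible and $\nu(\ff)\le\frac{s+1}{2}$.

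The point needing care is the matching fact: one must not assume $\ff$ contains the ``canonical'' matching $\{(3i-2,3i-1,3i):i\in[m]\}$ — it need not, even for shifted $\ff$ — but only that $\ff$ contains \emph{some} size-$m$ matching whose union is an initial segment, which is exactly what lets one glue the $G_i$'s onto the sets $(1,2,c)$ into a contiguous union of size $2s+2$. The rest is routine bookkeeping with the inequalities comparing $3m$ to $2s+2$ and $2m$ to $s+2$, which behave as stated in the range $s\ge 10$ at hand.
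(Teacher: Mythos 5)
Your proof is correct and follows essentially the same route as the paper: exploit shiftedness to replace a maximum matching by one whose union is an initial segment $[3m]$, then append sets of the form $(1,2,c)$ (available from $(1,2,2s+2)\in\ff$ and shiftedness) to produce $s$ edges covering $[2s+2]$, contradicting $U(s,2s+1)$. The paper does this in one line with $m=\lceil s/2\rceil+1$; you spell out the standard minimum-element-sum argument for the initial-segment matching and split the trivial case $3m\ge 2s+2$ off explicitly, but the underlying idea is identical.
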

\begin{proof}
  Indeed, otherwise there are $\lceil s/2\rceil+1$ sets in $\ff$, whose union is $[\lceil 3s/2\rceil +3]$. Together with $\lfloor s/2\rfloor -1$ sets $(1,2,\lceil 3s/2\rceil +4),\ldots, (1,2,2s+2)$, we get $s$ sets whose union is $[2s+2]$.
\end{proof}

Since the Erd\H os Matching Conjecture holds for $k=3$ (cf. \cite{F11}), we have $|\ff_3\cap \ff|\le {2s+1\choose 3}-{2s+1-\frac {s+1}2\choose 3}$.

To prove $|\ff|\le |\ff_3|$ we need to show
\begin{equation}\label{ineq1}
  {2s+1-\frac{s+1}2\choose 3}>|\G_2|+|\G_1\cap \ff|.
\end{equation}
We have $|\G_2|\le {s+1\choose 2}(n-2s-1)$. Using Claim~\ref{cla11}, we get that the right hand side of \eqref{ineq1} is at most
$$\Big({s+1\choose 2}+1)\Big)(n-2s-1).$$
On the other hand, the left hand side of \eqref{ineq1} is
$$\Big(\frac 9{16}s^2-\frac 1{16}\Big)(s-1).$$
Given that $n\le 3s$, the left hand side of \eqref{ineq1} is bigger than the right hand side if
$${s+1\choose 2}+1\le \frac 9{16}s^2-\frac 1{16},$$
which holds for any $s\ge 10$.
\end{proof}

 \subsection{The complete solution for $k=s=3$, $q=7$.}
 In this section, we prove a stronger form of Conjecture~\ref{conj3} for these parameters.
 \begin{thm}\label{thmu37}
   Let $\ff\subset {[n]\choose 3}$ be shifted and satisfy $U(3,7)$. Then  \begin{equation}\label{equ37}\ff< \max_{i\in[3]}|\ff_i|,\end{equation} unless $\ff\in \{\ff_i:i\in[3]\}.$
 \end{thm}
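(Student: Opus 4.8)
The plan is to run the same shifting-based analysis as in Theorem~\ref{thmf3}, but push it to an exact result for the specific parameters $k=s=3$, $q=7$, where all quantities are explicit polynomials in $n$. Assume $\ff\subset\binom{[n]}{3}$ is shifted, $U(3,7)$, and $\ff\notin\{\ff_1,\ff_2,\ff_3\}$; we must show $|\ff|<\max_i|\ff_i|$. By Proposition~\ref{propshiftedunion} (applied with $s+1=3$, $q=7=(3-r)(s+1)+p$ in the relevant guises), $\ff\subset\ff_1\cup\ff_2\cup\ff_3=\aaa(1,1)\cup\aaa(4,2)\cup\aaa(7,3)$. Following the decomposition in the proof of Theorem~\ref{thmf3}, write $\ff$ as a disjoint union of $\ff\cap\binom{[7]}{3}$, $\ff\cap\G_2$ with $\G_2=\{(a,b,c):b\le 4,\ c\ge 8\}$, and $\ff\cap\G_1$ with $\G_1=\{(1,b,c):b\ge 5,\ c\ge 8\}$. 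Since $\ff$ is not contained in any single $\ff_i$, shiftedness forces certain small sets into $\ff$: $(1,2,8)\in\ff$ (else $\ff\subset\ff_3$), $(2,3,4)\in\ff$ (else $\ff\subset\ff_2$), and $(1,b,c)\in\ff$ for suitable $b,c$ witnessing that $\ff\not\subset\ff_2$ in the other direction.

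The heart of the argument is to bound the three pieces. For $\ff\cap\binom{[7]}{3}$: the sets in $\ff$ lying in $\binom{[7]}{3}$ satisfy $\nu\le 2$ (three disjoint triples would have union of size $9>7$), and in fact, arguing as in the "$\nu(\ff)\le\frac{s+1}{2}$" claim of Theorem~\ref{thmf3} together with the presence of $(1,2,8)$, one gets $\nu(\ff\cap\binom{[7]}{3})\le 2$; then the EMC for $k=3$ (Theorem of \cite{F11}) gives $|\ff\cap\binom{[7]}{3}|\le\binom73-\binom53=35-10=25$, and more careful shifting should pin this down further. For $\ff\cap\G_1$: the analogue of Claim~\ref{cla11} shows $(1,6,8)\notin\ff$ — otherwise $(1,6,8),(1,5,7)$ together with $(2,3,4)$ have union $[8]$, contradicting $U(3,7)$ — so only sets $(1,5,c)$ with $c\ge 8$ survive, giving $|\ff\cap\G_1|\le n-7$. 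For $\ff\cap\G_2$: the sets $(a,b,c)$ with $b\le 4$, $c\ge 8$; shiftedness plus $U(3,7)$ restricts the possible $(a,b)$-shadows severely, because e.g. $(1,4,8),(2,3,9)$ (if both present) have union of size $\ge 7$ already and adding a third disjoint-ish set overflows. One expects to show that the "link" of the high coordinate has at most $\binom{\ell}{2}$-type size for some small $\ell\le 4$, so $|\ff\cap\G_2|\le \binom42(n-7)$ or better, with equality analysis feeding into whether $\ff_2$ is the unique maximum.

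Having bounded $|\ff|\le 25 + \binom42(n-7) + (n-7)$ (or sharper), compare with $|\ff_1|=\binom{n-1}{2}$, $|\ff_2|=\binom{s}{2}(n-s-1)+\binom{s}{3}+\binom{n-s-1}{2}$-type expressions from the explicit formulas in Section~\ref{sec2s+1}, and $|\ff_3|=35$. The ranges of $n$ split into three regimes (governed by the crossover points $n\approx s^2=9$ and $n\approx\frac{10}{3}s=10$ computed before Theorem~\ref{thmf3}): for small $n$ (roughly $n\le 10$) one checks $|\ff|<|\ff_3|=35$ directly by the crude bound once $\ff$ is not one of the $\ff_i$; for intermediate $n$ one checks $|\ff|<|\ff_2|$; for large $n$ one checks $|\ff|<|\ff_1|$. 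The main obstacle I anticipate is the equality/stability analysis at the boundary: for a clean "unless $\ff\in\{\ff_i\}$" statement we cannot merely bound the three pieces, but must show that whenever two of the bounds are simultaneously near-tight the corresponding $\ff$ is forced to coincide with one of the extremal families — this requires a delicate case analysis of which generating sets can coexist under $U(3,7)$, exactly the kind of finite but intricate shifting bookkeeping that does not reduce to a single convexity trick. A secondary technical point is handling the finitely many genuinely small values of $n$ (say $7<n\le 12$) by hand, since the asymptotic comparisons of $|\ff_1|,|\ff_2|,|\ff_3|$ are not reliable there.
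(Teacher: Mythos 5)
Your high-level plan — decompose into $\ff\cap\binom{[7]}{3}$, $\ff\cap\G_2$, $\ff\cap\G_1$, bound each piece, and compare against $\max_i|\ff_i|$ in $n$-regimes — matches the paper's overall shape, and you correctly identify the forced small sets $(1,2,8)$, $(2,3,4)$ and the exclusion $(1,6,8)\notin\ff$. However, there are two genuine gaps that your plan cannot close as written.

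First, your bound $|\ff\cap\binom{[7]}{3}|\le 25$ cannot come from the Erd\H os Matching Conjecture. With $k=3$, $s=2$, and ground set $[7]$, one has $n=7<(s+1)k-1=8$, so the whole of $\binom{[7]}{3}$ already has $\nu\le 2$ and the EMC bound is the trivial $\binom{7}{3}=35$. The paper gets $25$ by a different mechanism: since $(1,2,8)\in\ff$, for each of the $10$ unordered partitions of $[2,7]$ into two triples $F,F'$, at most one of $F,F'$ can lie in $\ff$ (else $F\cup F'\cup(1,2,8)=[8]$ violates $U(3,7)$), forcing at least $10$ sets of $\binom{[7]}{3}$ to be missing.

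Second, your tail bound is far too weak to finish. You propose $|\ff\cap\G_2|\le\binom{4}{2}(n-7)$ and $|\ff\cap\G_1|\le n-7$, i.e.\ a tail of $7(n-7)$, which exceeds $|\ff_2|-25 = -3+6(n-7)$ for all $n$; so the comparison with $\ff_2$ fails in the middle regime. The missing idea is the dichotomy: either $(1,4,7)\notin\ff$, in which case every $(a,b,c)\in\ff$ with $c\ge 7$ has $(a,b)\in\binom{[3]}{2}$ (by shiftedness) and $|\ff|\le\binom{6}{3}+3(n-6)<|\ff_2|$ outright; or $(1,4,7)\in\ff$, which together with $(1,5,6)\in\ff$ forces $(2,3,8)\notin\ff$. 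Combining $(2,3,8)\notin\ff$ with $(1,6,8)\notin\ff$ shows that for each $c\ge 8$ the link $\ff(c)$ is contained in $\{(1,2),(1,3),(1,4),(1,5)\}$, giving a tail of at most $4(n-7)$. Then $|\ff|\le 25+4(n-7)$ beats $|\ff_3|=35$ for $n=8,9$ and $|\ff_2|=22+6(n-7)$ for $n\ge 10$, settling the theorem without any separate stability analysis. You anticipated that "delicate case analysis" would be needed at the boundary, but the $(1,4,7)$ dichotomy is exactly the device that makes it unnecessary — and it is absent from your plan.
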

Simple computation shows that $\max_{i\in[3]}|\ff_i|$ is given by $|\ff_3|$ for $n\le 9$, $|\ff_2|$ for $10\le n\le 11$ and $|\ff_1|$ for $n\ge 12$.
\begin{proof}[Proof of Theorem~\ref{thmu37}]
The statement is obvious for $n\le 7$ since $\ff_3 = {[7]\choose 3}$ is $U(3,7)$. In what follows, we assume that $n\ge 8$.
Arguing indirectly, assume that $\ff$ is not contained in any of $\ff_i$. Then, by shiftedness, this implies that
\begin{itemize}
  \item[(i)] $(1,2,8)\in\ff;$
  \item[(ii)] $(1,5,6)\in \ff;$
  \item[(iii)] $(2,3,4)\in \ff.$
\end{itemize}
\begin{cla}
  $(1,6,8)\not\in\ff$ and $(2,4,8)\notin\ff$.
\end{cla}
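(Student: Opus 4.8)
The plan is to use the shiftedness of $\ff$ together with the three sets already known to lie in $\ff$ — namely $(1,2,8)$, $(1,5,6)$, and $(2,3,4)$ — to manufacture, from the assumption that one of the two triples is present, a collection of three edges of $\ff$ whose union has at least $8$ elements, contradicting the $U(3,7)$ property. The general mechanism is: if $A\in\ff$ and $A\succ_s B$ then $B\in\ff$, so each hypothetical edge spawns a whole down-set of edges, and we then pick three of them (possibly combined with the known edges (i)--(iii)) whose union is too large.

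First I would handle $(1,6,8)$. Suppose $(1,6,8)\in\ff$. By shiftedness all sets $(1,b,c)$ with $b\le 6$, $c\le 8$ belong to $\ff$; in particular $(1,6,7)\in\ff$ and $(1,4,5)\in\ff$. The obstruction should come from combining such an edge with $(2,3,4)$ from (iii) and with $(1,2,8)$ from (i): for instance $(2,3,4)\cup(1,6,7)\cup(1,2,8)$, or a similar triple, covers $\{1,2,3,4,6,7,8\}$ — that is only $7$ elements, so I need to be more careful and instead aim for a triple covering $\{1,\dots,8\}$, e.g. by using an edge containing $5$ such as $(1,4,5)$ or $(1,5,6)$ together with an edge containing $7,8$; the right choice is $(2,3,4)\cup(1,5,6)\cup$(something containing $7$ and $8$). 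Since $(1,6,8)\in\ff$ implies $(1,7,8)\notin\ff$ need not hold, I should instead derive $(1,6,8)\succ_s(1,4,5)$ is false — so the cleaner route is: from $(1,6,8)$, shifting down the third coordinate gives nothing new past $8$; the genuine contradiction is that $\{(1,6,8),(1,5,7),(2,3,4)\}\subset\ff$ (the middle one by $(1,6,8)\succ_s(1,5,7)$) has union $\{1,2,3,4,5,6,7,8\}$ of size $8>7$. That is the argument I would write down, after double-checking $(1,5,7)\prec_s(1,6,8)$ coordinatewise.

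For $(2,4,8)$: suppose $(2,4,8)\in\ff$. Then by shiftedness $(1,3,8)$, $(2,3,8)$, $(1,4,8)$, etc. all lie in $\ff$, and more usefully $(2,4,5)\prec_s(2,4,8)$ so $(2,4,5)\in\ff$, and also $(1,3,7)\prec_s(2,4,8)$ so $(1,3,7)\in\ff$. Now I want a triple with union of size $\ge 8$: take $(1,3,7)$, $(2,4,5)$, and an edge containing $6$ and $8$; from (i) we have $(1,2,8)\in\ff$ which contributes $8$ but not $6$, so instead use $(1,5,6)\in\ff$ from (ii) for the element $6$ and note that then $(1,3,7)\cup(2,4,5)\cup(1,5,6)=\{1,2,3,4,5,6,7\}$ has size $7$ only. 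To get the $8$th point I would take the triple $\{(2,4,8),(1,3,7),(1,5,6)\}$: its union is $\{1,2,3,4,5,6,7,8\}$, size $8>7$, contradiction — provided all three are in $\ff$, which holds since $(2,4,8)$ is assumed, $(1,5,6)$ is (ii), and $(1,3,7)\prec_s(2,4,8)$ gives $(1,3,7)\in\ff$ by shiftedness.

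The main obstacle is purely bookkeeping: one must be careful to select three edges whose union is \emph{exactly} a set of size at least $8$, and to verify each selected edge either is one of (i)--(iii) or is dominated coordinatewise by $(1,6,8)$ (resp.\ $(2,4,8)$) so that shiftedness applies. There is genuine room to pick the wrong triple (as my false starts above illustrate), so the write-up should explicitly display, for each of the two cases, the three edges, the coordinatewise domination check, and the resulting $8$-element union. No deeper idea is needed; the $U(3,7)$ property with $s=3$ means a single bad triple suffices.
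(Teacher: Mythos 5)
Your final argument is exactly the paper's: for $(1,6,8)$ you use the triple $\{(1,6,8),(1,5,7),(2,3,4)\}$ with $(1,5,7)\prec_s(1,6,8)$ and (iii), and for $(2,4,8)$ you use $\{(2,4,8),(1,3,7),(1,5,6)\}$ with $(1,3,7)\prec_s(2,4,8)$ and (ii), each union having size $8>7$. Despite the intermediate false starts, the proof you settle on is correct and is precisely the paper's argument.
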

\begin{proof}
  Using shiftedness, if $(1,6,8)\in\ff$ then $(1,5,7)\in\ff$. Together with (iii) this contradicts $U(3,7)$.

  Similarly, if $(2,4,8)\in\ff$ then $(1,3,7)\in \ff$ and (ii) gives the contradiction with $U(3,7)$.
\end{proof}

\begin{cla}\label{cla121}
  $\big|\ff\cap{[7]\choose 3}\big|\le {7\choose 3}-10 = 25$.
\end{cla}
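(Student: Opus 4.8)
The plan is to wring everything out of the single ``long'' edge $(1,2,8)\in\ff$ guaranteed by (i), together with the $U(3,7)$ hypothesis; in fact (i) is the only one of (i)--(iii) that will be needed. The mechanism is this: whenever $F,G\in\ff$ with $F,G\subseteq[7]$, the three edges $(1,2,8),F,G$ lie in $\ff$, so $U(3,7)$ forces $|\{1,2,8\}\cup F\cup G|\le 7$; since $8\notin F\cup G$, this is the same as $\{1,2\}\cup F\cup G\neq[7]$, i.e.\ $F\cup G$ does not contain $\{3,4,5,6,7\}$. I would first record this observation and then use nothing else.

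Put $S:=\{3,4,5,6,7\}$ and let $k$ be the number of triples of $\ff$ contained in $S$; then $\binom{5}{3}-k=10-k$ triples inside $S$ are missing from $\ff$. The next step is to note that each triple $B\in\ff$ with $B\subseteq S$ produces two more missing triples: for $j\in\{1,2\}$ set $B_j:=\{j\}\cup(S\setminus B)$, which is a $3$-element set since $|S\setminus B|=2$. Then $B\cup B_j=\{j\}\cup S\supseteq S$, so the observation above (applied with $F=B$, $G=B_j$) shows $B_j\notin\ff$. These $2k$ triples $B_j$, taken over all such $B$ and both $j\in\{1,2\}$, are pairwise distinct --- indeed $j$ is the unique element of $B_j$ lying outside $S$ and $S\setminus B=B_j\cap S$, so $B_j$ determines both $j$ and $B$ --- and each $B_j$ meets $\{1,2\}$, hence is not contained in $S$. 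Therefore the $2k$ triples $B_j$ are also distinct (as a collection of triples) from the $10-k$ missing triples inside $S$, so $\binom{[7]}{3}$ contains at least $(10-k)+2k=10+k\ge 10$ triples outside $\ff$. This gives $\bigl|\ff\cap\binom{[7]}{3}\bigr|\le\binom{7}{3}-10=25$.

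I do not expect a genuine obstacle: the argument is short and uses only (i) and the $U(3,7)$ property. The single point to state with a little care is the distinctness bookkeeping at the end --- that the $2k$ sets $B_j$ are pairwise distinct and do not coincide with any of the $10-k$ missing triples inside $S$ --- and this is immediate from the fact that $B_j$ contains exactly one element ($j$) outside $S$. (The same computation can be repackaged via the ``co-trace'' map $T\mapsto S\setminus T$ on $\binom{[7]}{3}$: the observation says the co-traces of the triples of $\ff\cap\binom{[7]}{3}$ form an intersecting family of subsets of the $5$-set $S$, and a one-line weighting argument --- a $2$-subset of $S$ is the co-trace of one triple, a $3$-subset of two triples, a $4$-subset of one --- recovers the bound $2\cdot 10+1\cdot 5=25$.)
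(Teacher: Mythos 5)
Your proof is correct, and the underlying mechanism is exactly the paper's: only (i) and $U(3,7)$ are used, via the observation that $(1,2,8)$ together with any two triples $F,G\in\ff\cap\binom{[7]}{3}$ with $F\cup G\supseteq\{3,\dots,7\}$ would give three members of $\ff$ whose union is $[8]$. The paper packages the count via the ten unordered pairs $\{F,F'\}\subset\binom{[7]}{3}$ that partition $[2,7]$: at most one set from each pair lies in $\ff$, so at least $10$ triples are missing. Your main argument uses a different bookkeeping based on co-traces in $S=\{3,\dots,7\}$: the $10-k$ missing triples inside $S$, plus the $2k$ triples $B_j=\{j\}\cup(S\setminus B)$ with $j\in\{1,2\}$ and $B\in\ff\cap\binom{S}{3}$. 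Unwinding it, your pairs $\{B,B_2\}$ are precisely the paper's ten partitions of $[2,7]$; you additionally exclude the $B_1$'s using the $(1,4)$ complementary co-trace pairs, which the paper does not need but which costs nothing and refines the count to $10+k$. The co-trace/intersecting-family reformulation you sketch at the end is an elegant way to see the same complementary-pair count and reproduces the identical bound $2\cdot10+1\cdot5=25$. In short: correct, same key idea, mildly different accounting.
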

\begin{proof}
  There are $10$ pairs $F,F'\in {[7]\choose 3}$ with $F\cup F' = [2,7]$, and both sets in each pair cannot appear together in $\ff$ due to (i).
\end{proof}

\begin{cla}
  $(1,4,7)\in\ff$.
\end{cla}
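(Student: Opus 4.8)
The plan is to argue by contradiction within the ongoing indirect argument. Suppose $(1,4,7)\notin\ff$. Since $\ff$ is shifted, if some $F=(a,b,c)\in\ff$ had $b\ge 4$ and $c\ge 7$, then $(1,4,7)\prec_s F$ and shiftedness would force $(1,4,7)\in\ff$; hence every member of $\ff$ has $c\le 6$ or $b\le 3$, i.e.
\[\ff\ \subseteq\ {[6]\choose 3}\ \cup\ \aaa(3,2,n,3),\]
where the first part is the triples contained in $[6]$ and the second is the triples with at least two elements among $\{1,2,3\}$. (This is consistent with what we already know: $(1,2,8)\in\ff$ lies in $\aaa(3,2,n,3)$, while $(1,5,6),(2,3,4)\in\ff$ lie in ${[6]\choose 3}$.)

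Next I would count this union by inclusion--exclusion: $\big|{[6]\choose 3}\big|=20$, $|\aaa(3,2,n,3)|=3(n-3)+1=3n-8$, and their intersection (triples inside $[6]$ with at least two elements in $\{1,2,3\}$) has size $3\cdot 3+1=10$, so $|\ff|\le 20+(3n-8)-10=3n+2$. Comparing with $|\ff_2|=|\aaa(4,2,n,3)|=6(n-4)+4=6n-20$, one has $6n-20>3n+2$ exactly when $n\ge 8$, which holds by assumption; hence $|\ff|\le 3n+2<|\ff_2|\le\max_{i\in[3]}|\ff_i|$. As $\ff$ is not one of the $\ff_i$, this already yields the conclusion of Theorem~\ref{thmu37}, so from now on we may assume $(1,4,7)\in\ff$, proving the Claim.

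I do not expect any real obstacle. The only steps needing care are (a) checking that the triples forbidden by shiftedness once $(1,4,7)$ is absent are precisely those with $b\ge 4$ and $c\ge 7$, so the displayed containment is exact rather than an overcount, and (b) keeping the elementary inclusion--exclusion count correct; the resulting bound $3n+2$ sits comfortably below $\max_{i\in[3]}|\ff_i|$ for every $n\ge 8$, so no delicate estimate is required, and facts (i)--(iii) are not even used here.
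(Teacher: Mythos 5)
Your proof is correct and follows essentially the same strategy as the paper's: assume $(1,4,7)\notin\ff$, use shiftedness to conclude that every $F=(a,b,c)\in\ff$ has $b\le3$ or $c\le6$, bound $|\ff|$ by $3n+2$, and observe this is below $|\ff_2|=6n-20$ for $n\ge 8$. The only cosmetic difference is that the paper splits disjointly by $c\le6$ vs.\ $c\ge7$ (giving $\binom{6}{3}+3(n-6)$ directly), whereas you cover $\ff$ by ${[6]\choose3}\cup\aaa(3,2,n,3)$ and apply inclusion--exclusion, arriving at the identical bound $3n+2$; and your worry in step (a) about the containment being ``exact'' is unnecessary, since an upper bound only needs the containment to hold, not to be tight.
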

\begin{proof}
  The contrary would imply $(a,b)\in {[3]\choose 2}$ for $(a,b,c)\in\ff$ with $c\ge 7$. Consequently, $|\ff|\le {6\choose 3}+3(n-6)$.

  However, the right hand side of \eqref{equ37} is at least $|\ff_2| = 4+6(n-4)=16+6(n-6)>{6\choose 3}+3(n-6)$ for $n\ge 8$.
\end{proof}
Combined with (ii), we get the following corollary.
\begin{cor}
  $(2,3,8)\notin \ff$.
\end{cor}
Now $(1,6,8)\notin \ff$ implies
\begin{equation}\label{eq35}|\ff(n)|\le 4 \ \ \ \ \text{for all }n\ge 8.
\end{equation}
Using Claim~\ref{cla121}, we infer that
$$|\ff|\le 25+4(n-7).$$
The right hand side is strictly less than $|\ff_3|=35$ for $n=8,9$ and strictly less than $|\ff_2| = 22+6(n-7)$ for $n\ge 10$. This concludes the proof.
\end{proof}
\subsection{The case $k=3$, $q=2s+t$ for small $t$}\label{secf2} In this section, we study families $\ff\subset {[n]\choose 3}$ that are $U(s,2s+t)$ for $t\le \epsilon s$ with some small $\epsilon>0$. We show in particular that $\ff_2$ from Section~\ref{sec2s+1} is the largest in a certain range.
Reusing the notation from Section~\ref{sec2s+1}, let us define the following families:
$$\ff_1:=\aaa(t,1,n,3), \ \ \ \ff_2:=\aaa(s+t,2,n,3), \ \ \ \ff_3:=\aaa(2s+t,3,n,3).$$
\begin{thm}\label{thmf2}
  Let $\ff \subset {[n]\choose 3}$ be $U(s,2s+t)$ with $s>t\ge 1$. Assume that $5(s+t)\le n\le \frac{(s+t)^2}{3t}$.
  Then $|\ff|\le |\ff_2|$.
\end{thm}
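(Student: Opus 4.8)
The plan is to reduce to shifted families and then to use Proposition~\ref{propshiftedunion} to confine $\ff$ to the union $\ff_1\cup\ff_2\cup\ff_3$, where here $\ff_1=\aaa(t,1,n,3)$, $\ff_2=\aaa(s+t,2,n,3)$, $\ff_3=\aaa(2s+t,3,n,3)$. Since $\ff_2$ is itself $U(s,2s+t)$ (because any $s$ edges each meeting $[s+t]$ in at least $2$ points have union of size at most $(s+t)+s$), it suffices to bound $|\ff\setminus\ff_2|$ against $|\ff_2\setminus\ff|$, i.e.\ to show the ``gain'' from edges of $\ff$ outside $\ff_2$ does not exceed the ``loss'' incurred by edges of $\ff_2$ absent from $\ff$. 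As in the proof of Theorem~\ref{thmf3}, the sets outside $\ff_2$ but inside $\ff_1\cup\ff_3$ split into $\ff_3$ proper (the finite piece ${[2s+t]\choose 3}$) and a ``thin'' part $\G_1:=\ff_1\setminus(\ff_2\cup\ff_3)=\{(1,b,c):b\ge s+t+1,\ c\ge 2s+t+1\}$.

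First I would bound $|\ff\cap\G_1|$. By shiftedness, if some $(1,b,c)\in\ff$ with $b\ge s+t+2$ and $c\ge 2s+t+1$, then all $(1,b-\ell,c-\ell)$ for $\ell=0,\dots$ lie in $\ff$; taking $\lceil (s-t)/2\rceil$ or so such sets together with a few small ``spread'' sets of the form $(2,3,\cdot)$ (which are in $\ff$ provided $\ff\not\subset\ff_3$, hence $(2,3,2s+t+1)\in\ff$ by shiftedness) would produce $s$ edges with union exceeding $2s+t$, a contradiction; this forces $b=s+t+1$ for every edge of $\G_1$ in $\ff$, giving $|\ff\cap\G_1|\le n-2s-t$. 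Analogously, if $(2,3,s+t+2)\in\ff$ one gets $\ff\cap\G_1=\emptyset$ outright — so one of these two alternatives always applies, and in the worst case $|\ff\cap\G_1|\le n-2s-t$, a term of lower order than $|\ff_2\setminus\ff_3|\asymp {s+t\choose2}(n-2s-t)$.

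Next I would control $|\ff\cap\ff_3|=|\ff\cap{[2s+t]\choose 3}|$ via the matching number. The key claim is $\nu(\ff)\le \tfrac{s+t}{2}$ (roughly): otherwise one finds $\nu(\ff)$ pairwise disjoint edges with union of size $3\nu(\ff)$, and padding with edges $(1,2,\cdot)$ (present by $\ff\not\subset\ff_3$) up to $s$ edges total yields a union of size $>2s+t$. Then, since the Erd\H os Matching Conjecture holds for $k=3$ (cf.\ \cite{F11}), $|\ff\cap{[2s+t]\choose3}|\le {2s+t\choose 3}-{2s+t-\lfloor(s+t)/2\rfloor\choose 3}$. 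So it remains to verify the arithmetic inequality
$$
{s+t\choose 2}(n-2s-t)\ \ge\ \Big({2s+t\choose 3}-{2s+t-\lfloor(s+t)/2\rfloor\choose 3}\Big)+(n-2s-t),
$$
which should follow from the hypotheses $n\ge 5(s+t)$ (making the left side, of order $(s+t)^2\cdot n$, dominate the cubic right-hand term, of order $s^3$, once $n\gtrsim s$) together with $n\le \tfrac{(s+t)^2}{3t}$ — the upper bound on $n$ is what guarantees $\ff_2$ beats $\ff_1$, i.e.\ that we are in the regime where $\ff_2$ is genuinely extremal, and it enters when comparing ${s+t\choose2}(n-2s-t)$ with ${n-t\choose 2}=|\ff_1\setminus\ff_2|$ in the complementary direction.

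The main obstacle I anticipate is twofold: \emph{(a)} making the matching-number claim fully rigorous requires care about how the padding edges $(1,2,\cdot)$ interact with the disjoint family — one must ensure enough indices above $2s+t$ are ``free'' and that shiftedness really delivers all the needed padding sets, which is where the precise threshold $\nu(\ff)\le (s+t)/2$ (as opposed to a cruder bound) comes from; and \emph{(b)} the final numerical inequality is genuinely two-sided — one needs \emph{both} $n\ge 5(s+t)$ (to kill the cubic term and the $\G_1$ contribution) \emph{and} $n\le (s+t)^2/(3t)$ (to stay in the window where $|\ff_2|\ge|\ff_1|$), so the estimates must be tight enough not to waste either constraint. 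I would handle (b) by treating the two competitions ($\ff_2$ vs.\ $\ff_3$ and $\ff_2$ vs.\ $\ff_1$) separately and checking that the stated range is contained in the intersection of the two favorable regions with room to spare for the lower-order error terms.
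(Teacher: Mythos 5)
Your plan imports the matching--number argument from Theorem~\ref{thmf3}, but the arithmetic does not close, and this is not a matter of sharpening constants: the approach is off by a genuine constant factor.

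To prove $|\ff|\le|\ff_2|$ after splitting $\ff\subset\G_1\cup(\ff_2\setminus\ff_3)\cup\ff_3$ and discarding the middle piece via the trivial $|\ff\cap(\ff_2\setminus\ff_3)|\le|\ff_2\setminus\ff_3|$, what you actually need is
\[
|\ff\cap\G_1|+|\ff\cap\ff_3|\ \le\ |\ff_2\cap\ff_3|,
\]
not the displayed inequality with $|\ff_2\setminus\ff_3|={s+t\choose2}(n-2s-t)$ on the left (that comparison does not bound $|\ff\setminus\ff_2|$ against $|\ff_2\setminus\ff|$). And the correct inequality fails for your bounds. With $\nu(\ff)\le\lfloor(s+t)/2\rfloor$ and the Erd\H os Matching Conjecture on ${[2s+t]\choose 3}$, the best you get is roughly
\[
|\ff\cap\ff_3|\ \lesssim\ {2s+t\choose3}-{\tfrac{3s+t}{2}\choose3}\ \approx\ \tfrac{37}{48}\,s^3 \quad(t\ll s),
\]
whereas
\[
|\ff_2\cap\ff_3|\ =\ {s+t\choose3}+s{s+t\choose2}\ \approx\ \tfrac{2}{3}\,s^3=\tfrac{32}{48}\,s^3 .
\]
So your upper bound on $|\ff\cap\ff_3|$ alone already exceeds $|\ff_2\cap\ff_3|$; adding the $\G_1$ term only makes it worse, and no amount of care with the padding-edge bookkeeping in step (a) rescues this. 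The reason Theorem~\ref{thmf3} could afford the global $\nu(\ff)\lesssim s/2$ bound is that there one compares against $|\ff_3|$ itself, not against the smaller intersection $|\ff_2\cap\ff_3|$.

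The paper's proof takes a different route precisely to avoid this loss. It exhibits $s$ explicit sets $B_1,\dots,B_{t+1}$ and $A_1,\dots,A_{s-t-1}$ whose union is $[2s+t+1]$, so one of them is missing from $\ff$, and then argues by cases. If some $B_i$ is missing, shiftedness forces $\nu(\ff\setminus\ff_3)\le t$ — a matching-number bound on the part of $\ff$ \emph{outside} $\ff_3$, which is much smaller than $(s+t)/2$ — and one bounds $|\ff\cap\ff_3|$ trivially by $|\ff_3|$; the comparison with $|\ff_2|$ is then won using both $n\ge5(s+t)$ and $n\le(s+t)^2/(3t)$. If instead some $A_\ell$ is missing, every $F=(a,b,c)\in\ff\setminus\ff_2$ must have $s+t<b<c\le 2s-\ell$, confining $\ff\setminus\ff_2$ to a small window, while every set with $2t+2+\ell\le b\le s+t$ and $c\ge 2s+1-\ell$ is shown to lie in $\ff_2\setminus\ff$; a direct counting comparison finishes the job. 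Your write-up never produces a nontrivial lower bound for $|\ff_2\setminus\ff|$, and the case split on which structured set is absent — the engine that yields either the strong bound $\nu(\ff\setminus\ff_3)\le t$ or the structural confinement of $\ff\setminus\ff_2$ — is the missing idea.
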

As we have seen in Theorems~\ref{thmsunion} and~\ref{thmf3}, both the upper and the lower bounds on $n$ are tight up to constants.
\begin{proof}
  Take $\ff$ as in the theorem and w.l.o.g. assume that $\ff$ is shifted and $\ff\not\subset \ff_2$. This implies $(1,s+t+1,s+t+2)\in \ff$.

  Let us define the following $t+1$ sets $B_1,\ldots, B_{t+1}$:
  $$B_i:=(i,2t+3-i,2s+t+2-i), \ \ \ i=1,\ldots, t+1.$$
  Clearly, $$B_1\cup\ldots \cup B_{t+1} = [2t+2]\cup [2s+1,2s+t+1].$$
  Next, define $A_1,\ldots, A_{s-t-1}$ by
  $$A_i:=(1,2t+2+i, 2s+1-i),\ \ \ i=1,\ldots, s-t-1.$$
  Then $A_{s-t-1}= (1,s+t+1,s+t+2)\in \ff$ and
  $$A_1\cup\ldots\cup A_{s-t-1} = \{1\}\cup [2t+3,2s].$$
  In particular, the union of the $s$ sets $B_1,\ldots, B_{t+1}$ and $A_1,\ldots, A_{s-t-1}$ is $[2s+t+1]$. Consequently, at least one of them is missing from $\ff$.

  We prove the theorem separately according to whether $B_i\notin \ff$ for some $i$ or $A_j\notin \ff$ for some $j$.

\begin{prop}
  If not all $B_i$, $1\le i\le t+1$, are in $\ff$ then $\nu(\ff\setminus \ff_3)\le t$.
\end{prop}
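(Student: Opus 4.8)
The plan is to argue by contradiction: suppose $\nu(\ff\setminus\ff_3)\ge t+1$, so there are $t+1$ pairwise disjoint edges $F_1,\dots,F_{t+1}\in\ff$, none of which lies in $\ff_3=\aaa(2s+t,3,n,3)$, i.e.\ each $F_j$ meets the complement $[2s+t+1,n]$. First I would use shiftedness to "push" this matching into a canonical position: since $\ff$ is shifted, I may replace the $t+1$ disjoint edges by disjoint edges whose union is as small as possible while each still uses an element outside $[2s+t]$. Concretely, shifting lets me assume the $t+1$ edges occupy the ground set $[2t+2]$ together with $t+1$ large elements; after further shifting the large coordinates down as far as shiftedness permits, the natural target is exactly the configuration $B_1,\dots,B_{t+1}$ from the statement (or a shift of it lying coordinatewise below it). The key point is that membership of all $t+1$ of these "minimal disjoint edges outside $\ff_3$" in $\ff$ would already be forced, contradicting the hypothesis that not all $B_i\in\ff$.

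More carefully, the step I would carry out is: from $t+1$ pairwise disjoint edges each meeting $[2s+t+1,n]$, use shiftedness to produce $t+1$ pairwise disjoint edges $B_1',\dots,B_{t+1}'$ with $B_i'\prec_s B_i$ for all $i$ (in the notation of Definition~\ref{defsh}), hence $B_i\in\ff$ for all $i$ once I know $B_i'\in\ff$ — but $B_i'\in\ff$ follows because $B_i'$ is a shift of one of the original edges $F_j$. This gives the contradiction. The order of steps is thus: (1) assume $t+1$ disjoint edges outside $\ff_3$ exist; (2) shift each edge's coordinates down maximally subject to keeping the edges disjoint and keeping a large element in each; (3) observe that the resulting disjoint edges dominate $B_1,\dots,B_{t+1}$ coordinatewise, forcing all $B_i\in\ff$; (4) conclude, contradicting the hypothesis.

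The main obstacle I expect is step (2)–(3): making precise that shiftedness can simultaneously compress $t+1$ \emph{disjoint} edges into the prescribed positions without ever making two of them collide, and that the "large" coordinate in each can be brought down exactly to $2s+t+2-i$ and no further (it cannot go below $2s+t+1$ since the edge must leave $\ff_3$, after the small coordinates have been packed into $[2t+2]$). One has to order the edges by their largest element and argue greedily, shifting the edge with the smallest large-coordinate first, so that the packing $[2t+2]$ on the small side and the block $\{2s+2,\dots,2s+t+2\}$ on the large side is reached; a clean way is to note that any $t+1$ pairwise disjoint $3$-sets, each containing an element $\ge 2s+t+1$, have a common coordinatewise lower bound among the $B_i$'s after relabelling, because $2(t+1)$ small slots fit in $[2t+2]$ and the remaining $t+1$ coordinates are forced to be $\ge 2s+t+1>2s+t+2-i$ only in the extreme case — here one must check the off-by-one carefully. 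Once this compression lemma is in place, the proposition is immediate.
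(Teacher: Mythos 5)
Your overall strategy is the same as the paper's: take $t+1$ pairwise disjoint edges of $\ff\setminus\ff_3$, compress them by shiftedness, and show that each $B_i$ is coordinatewise dominated by one of them, forcing $B_i\in\ff$, a contradiction. But two things are off. There is a direction slip: you write $B_i'\prec_s B_i$ and then claim $B_i'\in\ff$ gives $B_i\in\ff$, yet shiftedness makes $\ff$ \emph{downward} closed under $\prec_s$, so that inference goes the wrong way; what you need (and what your recap step~(3) correctly says) is $B_i\prec_s F_j$ for some edge $F_j$ already known to lie in $\ff$.

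More substantively, you leave the key step as a heuristic sketch, and it is exactly this step that carries the proposition. After compressing the $t+1$ disjoint triples, the $2t+2$ smaller coordinates form a perfect matching $\{(a_j,b_j)\}_{j\in[t+1]}$ of $[2t+2]$ with $a_j<b_j$, and every large coordinate is $\ge 2s+t+1\ge 2s+t+2-i$ for all $i\ge 1$, so the third coordinate of $B_i$ is always dominated. What is not automatic is that for each $i$ some pair satisfies $a_j\ge i$ \emph{and} $b_j\ge 2t+3-i$. Suppose not: each $j$ has $a_j\le i-1$ or $b_j\le 2t+2-i$; the first option occurs for at most $i-1$ indices $j$ (the $a_j$ are distinct), so at least $t+2-i$ pairs lie entirely inside $[i,2t+2-i]$, an interval of only $2t+3-2i$ integers --- too few to hold the $2(t+2-i)=2t+4-2i$ distinct values required. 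This pigeonhole is the crux; your phrases ``only in the extreme case'' and ``check the off-by-one carefully'' flag the gap without closing it, and the ``simultaneous compression to the $B_i$ positions'' you contemplate is stronger than needed and not actually available (the compressed matching of $[2t+2]$ need not coincide with the $B_i$ pattern).
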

  \begin{proof}
    Note first that $F\in\ff\setminus \ff_3$ iff $F = (a,b,c)$ with $c\ge 2s+t+1$. It is easy to see that $\nu(\ff\setminus \ff_3)\ge t+1$ implies that there are sets $F_1,\ldots, F_{t+1}\in\ff\setminus \ff_3$ such that $F_1\cup\ldots\cup F_{t+1} = [2t+2]\cup\{2s+t+1\}$.

    It is not difficult to see that, for any $B_i$, $i\in[t+1]$, there is $F_j$ such that $B_i\prec_s F_j$, and thus $B_i\in\ff$, a contradiction.
  \end{proof}
Since $n\ge 5(s+t)>6t$, we have $|\ff\setminus\ff_3|\le {n\choose 3}-{n-t\choose 3}$ and $|\ff|\le {n\choose 3}-{n-t\choose 3}+{2s+t\choose 3}$.
On the other hand, we have $|\ff_2| = {s+t\choose 3}+{s+t\choose 2}(n-s-t).$
An easy, but tedious, calculation shows that the first inequality below holds. $$|\ff_2|-|\ff| \ge \frac {(s+t)^3}6+\frac {(s+t)^2}2(n-s-t)-t{n\choose 2}-\frac{(2s+t)^3}6\ge \frac n2((s+t)^2-tn)-\frac 53(s+t)^3.$$
Since $5(s+t)\le n\le \frac{(s+t)^2}{3t}$, the right hand side is at least $\frac 52(s+t)\frac{2(s+t)^2}3-\frac 53(s+t)^3\ge 0$.

Thus, we may assume that $A_{\ell} = (1,2t+2+\ell, 2s+1-\ell),$ $\ell \in [s-t-2]$, is not in $\ff$. We want to conclude the proof by showing that $|\ff_2\setminus \ff|\ge |\ff\setminus \ff_2|$.

For any $(a,b,c)\in\ff\setminus \ff_2$, we have $s+t<b<c\le 2s-\ell$. Thus, we can bound $$|\ff\setminus \ff_2|\le {s-t-\ell\choose 3}+(s+t){s-t-\ell\choose 2}.$$
On the other hand, $(a,b,c)\in\ff_2\setminus \ff$ if $2t+2+\ell\le b\le s+t$, $2s+1-\ell\le c\le n$. The number of choices of $c$ is at most $n-2s$, and the number of choices of $(a,b)$ is ${s+t\choose 2}-{2t+1+\ell\choose 2}.$
Thus, to show that $|\ff\setminus \ff_2|\le |\ff_2\setminus \ff|$, we need to show
$${s-t-\ell\choose 3}+(s+t){s-t-\ell\choose 2}\le (n-2s)\Big({s+t\choose 2}-{2t+1+\ell\choose 2}\Big).$$
Let us show this for some range of $\ell$ by (reverse) induction on $\ell$. For $\ell = s-t-2$, the left hand side is $s+t$, and the right hand side is $(n-2s)(s+t-1),$ and thus the inequality is satisfied. When passing from $\ell$ to $\ell-1$, the LHS increases by ${s-t-\ell\choose 2}+(s+t)(s-t-\ell) \le (s-t-\ell)\frac{3s+t-\ell}2$, and the RHS increases by $(n-2s)(2t+1+\ell)$. Since $n\ge 5(s+t)$, we are good as long as $2t+1+\ell\ge (s-t)/2$. If this inequality does not hold, then
the RHS of the displayed inequality is at least          $(n-2s)\cdot \frac 34{s+t\choose 2}$, which is bigger than the LHS as long as $\frac 34(n-2s)\ge \frac s3+s=\frac 43s$. This holds for $n\ge 5(s+t)\ge 5s.$

\end{proof}

\section{Proof of Theorem~\ref{thmsunion}}\label{secmain}
We say that the families $\ff_1,\ldots,\ff_{s+1}$ are {\it cross-dependent} if there are no $F_1\in\ff_1,\ldots, F_{s+1}\in \ff_{s+1}$ such that $F_i\cap F_j=\emptyset $ for any $i\ne j$. We say that the families $\ff_1,\ldots, \ff_s$ are {\it nested} if $\ff_1\supset\ldots\supset\ff_s$.\\

Since the $U$-property is preserved by shifting, we may  w.l.o.g.  assume that $\ff$ is shifted.


Denote $\mathcal G(B):=\big\{G\setminus [p+1]:G\in \mathcal G, G\cap [p+1]=B\big\}.$
Then \begin{equation}\label{eq001}
|\ff(B)|\le |\aaa_{p,r}(B)|\ \ \ \text{ holds for all }\ \ \ B\subset [p+1],\ |B|\ge r+1.
     \end{equation}

\begin{cla}\label{cla0}
  For any $B\subset [p+1]$, $|B|\le r-1$, we have $\nu(\ff(B))\le s$.
\end{cla}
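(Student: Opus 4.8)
The plan is to argue by contradiction, lifting a hypothetical large matching in $\ff(B)$ back up to a collection of sets in $\ff$ whose union is too big. Suppose $\nu(\ff(B))\ge s+1$ for some $B\subseteq[p+1]$ with $|B|\le r-1$. Then there are pairwise disjoint sets $G_1,\ldots,G_{s+1}\in\ff(B)$, each of size $k-|B|$ and contained in $[p+2,n]$. By the definition of $\ff(B)$, the sets $F_i:=B\cup G_i$ all lie in $\ff$, and since the $G_i$ are pairwise disjoint and each is disjoint from $B$, we have $F_i\cap F_j=B$ for $i\ne j$. Hence
$$|F_1\cup\ldots\cup F_{s+1}|=|B|+(s+1)(k-|B|)=(s+1)k-s|B|.$$

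Next I would bound this from below using $|B|\le r-1$ and compare with $q=(k-r)(s+1)+p$. The gap is
$$\big((s+1)k-s(r-1)\big)-\big((k-r)(s+1)+p\big)=(s+1)r-s(r-1)-p=r+s-p,$$
which is at least $1$ precisely because the hypothesis of Theorem~\ref{thmsunion} imposes $p\le s+r-1$. Therefore $|F_1\cup\ldots\cup F_{s+1}|\ge q+1>q$, contradicting property $U(s+1,q)$ of $\ff$. This yields $\nu(\ff(B))\le s$.

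I do not expect a genuine obstacle here: the argument is a one-line reduction and, notably, does not even invoke shiftedness, since the $F_i$ intersect in exactly $B$ by construction. The only points needing care are the bookkeeping in the union-size identity and verifying that $p\le s+r-1$ is exactly the inequality making $r+s-p\ge1$; in the extreme case $p=s+r-1$, $|B|=r-1$ the gap is exactly $1$, so the conclusion is still strict, and the same computation goes through uniformly for all admissible $B$ (including $B=\emptyset$, as $(s+1)r>p$ throughout the stated range).
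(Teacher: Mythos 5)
Your proof is correct and essentially identical to the paper's: both lift a hypothetical $(s+1)$-matching in $\ff(B)$ to $s+1$ sets of $\ff$ whose union has size $|B|+(s+1)(k-|B|)\ge (s+1)k-s(r-1)$, and both use $p\le s+r-1$ to see this exceeds $q=(k-r)(s+1)+p$. Your remark that shiftedness is not needed is accurate and also matches the paper's argument.
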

\begin{proof}
  Indeed, the opposite gives $s+1$ members of $\ff$ whose union has size $|B|+(k-|B|)(s+1)\ge r-1+(k-r+1)(s+1)=s+r+(k-r)(s+1)>p+(k-r)(s+1)$, a contradiction with the $U(s+1,q)$ property.
\end{proof}
Similarly, we can obtain the following claim.
\begin{cla}\label{cla1} For fixed $B\in {[p+1]\choose r-1}$, any $s+1$ families $\ff(\{i_1\}\cup B),\ldots, \ff(\{i_{s+1}\}\cup B)$, where $i_1,\ldots, i_{s+1}\in [p+1]\setminus B$ and $\bigcup_{j\in [s+1]}\{i_j\} = [p+1]\setminus B$ (note that some of the $i_j$ will coincide for $p<s+r$), are cross-dependent.
\end{cla}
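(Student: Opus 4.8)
The plan is to argue by contradiction, in direct parallel with the proof of Claim~\ref{cla0}: if the families fail to be cross-dependent, then I can assemble $s+1$ members of $\ff$ whose union has size $q+1$, contradicting the property $U(s+1,q)$.

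Concretely, I would suppose that $\ff(\{i_1\}\cup B),\ldots,\ff(\{i_{s+1}\}\cup B)$ are not cross-dependent, so that there exist $F_1\in\ff(\{i_1\}\cup B),\ldots,F_{s+1}\in\ff(\{i_{s+1}\}\cup B)$ with $F_a\cap F_b=\emptyset$ for all $a\ne b$. Since $i_j\notin B$, the set $\{i_j\}\cup B$ has exactly $r$ elements, so by the definition of $\ff(\cdot)$ each $F_j$ is a subset of $[p+2,n]$ of size exactly $k-r$, and $G_j:=F_j\cup\{i_j\}\cup B$ is a genuine member of $\ff$ whose trace on $[p+1]$ is $\{i_j\}\cup B$.

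Next I would estimate the size of $G_1\cup\ldots\cup G_{s+1}$. Its intersection with $[p+1]$ is $\bigcup_{j}(\{i_j\}\cup B)=\bigl(\bigcup_j\{i_j\}\bigr)\cup B=([p+1]\setminus B)\cup B=[p+1]$, contributing $p+1$ elements; here the hypothesis $\bigcup_{j\in[s+1]}\{i_j\}=[p+1]\setminus B$ is used. Its part outside $[p+1]$ is $F_1\cup\ldots\cup F_{s+1}$, a union of $s+1$ pairwise disjoint $(k-r)$-sets, contributing $(s+1)(k-r)$ elements, and it is of course disjoint from $[p+1]$. Hence $|G_1\cup\ldots\cup G_{s+1}|=(p+1)+(s+1)(k-r)=q+1$, which contradicts $U(s+1,q)$ since $G_1,\ldots,G_{s+1}\in\ff$.

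I do not expect a genuine obstacle in this argument; the only points requiring a little care are that $\{i_j\}\cup B$ has size $r$ rather than $r+1$ (this is why the $F_j$ have the claimed cardinality $k-r$ and why the union comes out to exactly $q+1$), and that the $i_j$ are allowed to repeat --- which is harmless, since repetitions only shrink $\bigcup_j\{i_j\}$, and the condition $p\le s+r-1$ is precisely what makes $|[p+1]\setminus B|=p-r+2\le s+1$, guaranteeing that a covering choice of $i_1,\ldots,i_{s+1}$ exists at all.
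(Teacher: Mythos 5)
Your proof is correct and is essentially the paper's argument, just spelled out in full. The paper's one-line proof simply asserts that the contrary gives $s+1$ members of $\ff$ whose union has size $p+1+(k-r)(s+1)$, which equals $q+1$; your write-up supplies exactly the missing bookkeeping — that each $\{i_j\}\cup B$ has size $r$, that the lifted sets $G_j=F_j\cup\{i_j\}\cup B$ lie in $\ff$, that their trace on $[p+1]$ covers all of $[p+1]$ by the hypothesis $\bigcup_j\{i_j\}=[p+1]\setminus B$, and that the $F_j$ contribute $(s+1)(k-r)$ disjoint elements outside $[p+1]$ — and reaches the same contradiction with $U(s+1,q)$.
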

\begin{proof}
  Indeed, the opposite gives $s+1$ members of $\ff$ whose union has size $p+1+(k-r)(s+1)$, a contradiction.
\end{proof}

Let us recall that the following theorem was proved in \cite{F4}.
\begin{thm}[\cite{F4}]\label{thmshadowmatch} If $\mathcal G\subset {[m]\choose l}$ satisfies $\nu(\mathcal G)\le s$ then $s|\partial \mathcal G|\ge |\mathcal G|$.
\end{thm}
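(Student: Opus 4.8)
The plan is to prove Theorem~\ref{thmshadowmatch}, the statement that $s|\partial\mathcal G|\ge|\mathcal G|$ whenever $\mathcal G\subset\binom{[m]}{l}$ has $\nu(\mathcal G)\le s$, by a double-counting argument over pairs $(G,H)$ with $H=G\setminus\{x\}\in\partial\mathcal G$, weighting cleverly so that the matching bound enters. The classical approach here is the one from \cite{F4}: first reduce to the case where $\mathcal G$ is \emph{shifted} (the shift operation does not increase $\nu(\mathcal G)$ and does not increase $|\partial\mathcal G|$ while preserving $|\mathcal G|$), so that without loss of generality $\mathcal G$ is a shifted family with matching number at most $s$.

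For a shifted family I would then exploit the structural fact that if $\mathcal G$ is shifted with $\nu(\mathcal G)\le s$, then every $G\in\mathcal G$ must meet $[sl]$ — indeed, otherwise greedily selecting pairwise disjoint edges inside $[sl]$ together with $G$ produces $s+1$ disjoint edges, contradicting $\nu\le s$ (this is the standard ``shifted families live on a bounded ground set'' observation, cf.\ the arguments around Proposition~\ref{propshiftedunion}). The core of the argument is then a local injection or a counting inequality: for each $G\in\mathcal G$ one identifies at least one element $x\in G$ such that $G\setminus\{x\}$ can be ``charged'' to members of the shadow, and one shows that each shadow set $H\in\partial\mathcal G$ receives at most $s$ charges. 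Concretely, for $H\in\partial\mathcal G$ with $H=\{a_1<\dots<a_{l-1}\}$, the sets $G\in\mathcal G$ with $G\supset H$ and $G\in\partial^{-1}H$ being charged to $H$ should be confined to adding one of at most $s$ specified elements; shiftedness forces that if too many extensions $H\cup\{b\}$ lie in $\mathcal G$ for large $b$, one builds a matching of size $s+1$.

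The cleanest route, and the one I would actually write, is via the \emph{Kruskal--Katona-type / averaging} form: order so that each $G$ is mapped to $G\setminus\{\max G\}$, or use the permutation method — average over cyclic or linear orders of $[m]$ and count, for a random order, how many $G\in\mathcal G$ are ``initial segments'' in an appropriate sense versus how many shadow sets are. Then the matching condition $\nu(\mathcal G)\le s$ bounds the number of $G$ that can be simultaneously realized as disjoint initial blocks by $s$, yielding $s|\partial\mathcal G|\ge|\mathcal G|$ after taking expectations. Either way, the reduction to shifted families plus the ``bounded ground set'' fact plus one counting step is the skeleton.

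The main obstacle is the charging/injection step: making precise the claim that each $H\in\partial\mathcal G$ is the shadow-image of at most $s$ members of $\mathcal G$ under the chosen map, \emph{for a shifted family}, and verifying that the map is total (every $G$ gets charged somewhere). This requires carefully using shiftedness to convert a hypothetical large fiber over some $H$ into $s+1$ pairwise disjoint edges — the combinatorial heart of the theorem. I expect the rest (the shift reduction, the final arithmetic) to be routine, so I would spend the bulk of the write-up on that one lemma.
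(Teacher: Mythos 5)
The paper does not prove Theorem~\ref{thmshadowmatch}; it is imported as a black box from \cite{F4}, so there is no internal proof to compare against. Evaluating your sketch on its own merits: the reduction to shifted families is sound (shifting preserves $|\mathcal G|$, does not increase $\nu$, and does not increase $|\partial\mathcal G|$), and the observation that a shifted $\mathcal G$ with $\nu(\mathcal G)\le s$ has every edge meeting $[sl]$ is correct. But the entire content of the theorem is concentrated in the charging lemma you state and do not prove: that one can assign to each $G\in\mathcal G$ a shadow set $\varphi(G)\in\partial\mathcal G$ so that every $H\in\partial\mathcal G$ has at most $s$ preimages. You explicitly flag this as the ``combinatorial heart'' and then leave it open, so as written this is a plan, not a proof.

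Worse, the one concrete instantiation you float, namely $\varphi(G)=G\setminus\{\max G\}$, fails outright. Take the extremal shifted family $\mathcal G={[(s+1)l-1]\choose l}$, which has $\nu(\mathcal G)=s$ and attains equality in the theorem. The shadow set $H=\{1,\ldots,l-1\}$ is $\varphi(G)$ for every $G=H\cup\{b\}$ with $l\le b\le(s+1)l-1$, i.e.\ it has $sl$ preimages, far more than $s$. So the fiber bound you want is false for the natural map, and no replacement map is specified; shiftedness does not obviously ``force'' a large fiber over a fixed $H$ to unfold into $s+1$ pairwise disjoint edges, precisely because all those $G$ share the core $H$. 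The alternative permutation/cyclic-order route is likewise only gestured at: the matching condition bounds $|\mathcal G|$ directly via interval counts, but you do not say how to get the comparison with $|\partial\mathcal G|$ out of it, and this is not a routine adaptation. In short, the reduction and framing are fine, but the step that actually proves the inequality is missing.
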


Note that, for $G$ and $G'$ of the same size and such that $G\prec_s G'$ (cf. Definition~\ref{defsh}), we have $\ff(G)\supset \ff(G')$ due to the fact that $\ff$ is shifted. Similarly, $$\partial \ff(B)\subset \ff(B\cup\{i\})\ \ \text{ for any }\ \ B\subset [p+1]\text{ and }i\in [p+1]\setminus B,$$
and, combined with Theorem~\ref{thmshadowmatch}, we get that
\begin{equation}\label{eqprojup}|\ff(B)|\le s|\ff(B\cup\{i\})|\ \ \text{ for any }\ \ B\subset [p+1]\text{ and }i\in [p+1]\setminus B.\end{equation}
Applying \eqref{eqprojup} to any given $B\in{[p]\choose i}$, we can get that, for any $0\le j<i$, $$\sum_{X\in {B\choose j}}|\ff(X)|\le s^{i-j}{i\choose j}|\ff(B)|.$$
Summing this over all $B\in{[p]\choose i}$ and noting that each $j$-subset of $[p]$ is contained in ${p-j\choose i-j}$ $i$-subsets of $[p]$, we get that
$${p-j\choose i-j}\sum_{X\in {[p]\choose j}}|\ff(X)|=\sum_{B\in {[p]\choose i}}\sum_{X\in {B\choose j}}|\ff(X)|\le s^{i-j}{i\choose j}\sum_{B\in {[p]\choose i}}|\ff(B)|.$$
Using that ${i\choose j}/{p-j\choose i-j} = {p\choose j}/{p\choose i}$, we get that, for any $i\ge 0$,
$$\sum_{B\subset [p], |B|\le i}|\ff(B)|\le \sum_{j=0}^{i}\frac{s^{i-j}{p\choose j}}{{p\choose i}}\sum_{B\in {[p]\choose i}}|\ff(B)|\le s\sum_{j=0}^{i}\frac{s^{i-j}{p\choose j}}{{p\choose i}}\sum_{B\in {[p]\choose i}}\big|\ff(B\cup\{p+1\})\big|.$$
Similarly,
$$\sum_{B\subset [p], |B|\le i}|\ff(B\cup\{p+1\})|\le \sum_{j=0}^{i}\frac{s^{i-j}{p\choose j}}{{p\choose i}}\sum_{B\in {[p]\choose i}}|\ff(B\cup\{p+1\})|.$$
Summing these two expressions for $i=r-1$, we get that
\begin{equation}\label{eq002}
  \sum_{B\subset [p], |B|\le r-1}|\ff(B)|+|\ff(B\cup\{p+1\})|\le M\sum_{B'\in{[p+1]\choose r}\setminus {[p]\choose r}}|\ff(B')|,
\end{equation}
where $$M:=(s+1) \sum_{j=0}^{r-1}\frac{s^{r-1-j}{p\choose j}}{{p\choose r-1}}.$$
The following lemma (in the particular case $u=s+1$) was proved in \cite{F4} (see \cite[Theorem~3.1]{F4} and also \cite[Lemma~5]{FK7}).
\begin{lem}\label{lemcd}
  Let $N\ge (u+s)l$ for some $u\in\mathbb Z$, $u\ge s+1$, and suppose that $\mathcal G_1,\ldots, \mathcal G_{s+1}\subset{[N]\choose l}$ are cross-dependent and nested. Then
\begin{equation}\label{eq112} |\mathcal G_1|+|\mathcal G_2|+\ldots +|\mathcal G_{s}|+u|\mathcal G_{s+1}|\le s{N\choose l}.\end{equation}
\end{lem}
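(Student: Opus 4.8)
The cleanest route is to invoke \cite[Theorem~3.1]{F4} (the case $u=s+1$) together with its extension \cite[Lemma~5]{FK7}, of which the present statement is the announced generalization; below I describe how one would prove it directly, via the cyclic permutation (Katona circle) method. First I would reduce to a single cyclic ordering of $[N]$: if one picks a uniformly random cyclic order and lets $g_i$ denote the number of arcs (blocks of $l$ consecutive elements) lying in $\mathcal G_i$, then $\mathbb E[g_i]=N|\mathcal G_i|/\binom{N}{l}$, so by linearity it is enough to prove $g_1+\dots+g_s+u\,g_{s+1}\le sN$ for \emph{every} fixed cyclic order. Fix the order and write $T_i\subseteq\mathbb Z_N$ for the set of starting points of the arcs belonging to $\mathcal G_i$. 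Then $T_1\supseteq\dots\supseteq T_{s+1}$, and cross-dependence says there is no pairwise disjoint family of arcs $I_{x_1},\dots,I_{x_{s+1}}$ with $x_j\in T_j$ for all $j$. Writing $N_i$ for the number of points lying in $T_i$ but not in $T_{i+1}$ (the arcs of ``level'' exactly $i$), so that $\sum_{i=0}^{s+1}N_i=N$, the desired inequality $\sum_{i\le s}|T_i|+u|T_{s+1}|\le sN$ becomes the clean form $u\,N_{s+1}\le\sum_{i=0}^{s}(s-i)N_i$.

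I would establish this combinatorial inequality by induction on $s$, keeping $u\ge s+1$ and $N\ge(u+s)l$ as floating hypotheses. The base case $s=1$ is in essence the Erd\H os--Ko--Rado theorem for arcs: if $T_2\neq\emptyset$ then, since $T_2\subseteq T_1$, any two arcs with starts in $T_2$ must meet, so $T_2$ has diameter below $l$ and, as $N\ge 3l$, lies in a single length-$l$ arc; every start of $T_1$ must then meet the two extreme arcs of $T_2$, confining $T_1$ to a window of size $2l-1-\operatorname{diam}(T_2)$, whence $|T_1|+|T_2|\le 2l$ and so $|T_1|+u|T_2|\le 2l+(u-1)l=(u+1)l\le N$. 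For the inductive step I would pick a maximum ``rainbow'' system of pairwise disjoint arcs $B_1,\dots,B_m$ whose levels, sorted in decreasing order, dominate $1,2,\dots,m$; cross-dependence forces $m\le s$, and $m\ge 1$ as soon as $T_{s+1}\neq\emptyset$. The $B_a$ cover $ml$ points and leave at most $m$ gaps of total length $N-ml\ge(u+s-m)l\ge ul$; maximality then forces the high-level arcs to avoid the ``cores'' of these gaps, and translating this into a count of levels yields $u\,N_{s+1}\le\sum_{i\le s}(s-i)N_i$, the spare room $N-ml\ge ul$ being exactly what converts the coefficient $s+1$ into $u$.

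The step I expect to be the main obstacle is pinning down that last count. The naive estimate that an arc of length $l$ blocks a window of $2l-1$ potential starts loses a factor of $2$ and is too weak. What is really needed is the sharp geometric fact that on a cycle of length $N\ge(u+s)l$ a family of arcs with matching number at most $m$ has at most about $ml$ members (the extremal configurations being $m$ well-separated ``stars'' of arcs, or a block of $ml$ consecutive starts), not roughly $2ml$. It is this estimate, together with the careful bookkeeping of the interplay between arc levels and the gap structure of a maximum rainbow matching, that makes the hypothesis $N\ge(u+s)l$ exactly sufficient; this is the technical core of \cite[Theorem~3.1]{F4} and \cite[Lemma~5]{FK7}.
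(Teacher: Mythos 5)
The paper does not actually prove Lemma~\ref{lemcd}: it is stated with a citation to \cite{F4} (for the case $u=s+1$) and to \cite[Lemma~5]{FK7} (for general $u\ge s+1$), so there is no in-paper argument to compare against. Your proposed route via Katona's cyclic permutation method is, as you correctly note, the method used in those references, and the reduction you carry out is sound: averaging over random cyclic orders reduces \eqref{eq112} to the per-order claim $g_1+\dots+g_s+ug_{s+1}\le sN$; nestedness and cross-dependence pass to the starting-point sets $T_1\supseteq\dots\supseteq T_{s+1}$; and the level bookkeeping correctly recasts the claim as $u\,N_{s+1}\le\sum_{i=0}^{s}(s-i)N_i$. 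The base case $s=1$ is essentially right, though the step ``$T_2$ lies in a single length-$l$ arc'' rests on a Helly-type property of pairwise-intersecting arcs on a cycle that should be spelled out rather than asserted.

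There is, however, a genuine gap in the inductive step, and you identify it yourself. The sentences about a ``maximum rainbow system'' whose ``levels, sorted in decreasing order, dominate $1,\dots,m$'', about arcs ``avoiding the cores of the gaps'', and about ``translating this into a count of levels'' describe the shape of the bookkeeping one expects but do not carry it out; no inequality is actually derived from the maximality of the rainbow system, and the crucial conversion of the coefficient $s+1$ into $u$ using the slack $N-ml\ge ul$ is announced rather than proved. You explicitly call this ``the main obstacle'' and ``the technical core of \cite[Theorem~3.1]{F4} and \cite[Lemma~5]{FK7}''. In effect the proposal reconstructs the correct framework and then defers the one nontrivial step back to the very references the paper cites, so it does not constitute an independent proof: the per-cyclic-order inequality for general $s$ under $N\ge(u+s)l$ is the substance of the lemma and must be argued in full.
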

Fix $B\in{[p]\choose r-1}$ and assume that $[p]\setminus B = \{j_1,\ldots, j_{p-r+1}\}$. For each $z\in [p-r+1]$, the $(s+1)$-tuple $$\mathcal T(B,z):=\big(\ff(\{j_1\}\cup B),\ldots, \ff(\{j_{p-r+1}\}\cup B),\ff(\{p+1\}\cup B), \ff(\{j_z\}\cup B),\ldots,\ff(\{j_z\}\cup B)\big),$$
(note that $\ff(\{j_z\}\cup B)$ occurs $s-p+r$ times in this tuple in total) is cross-dependent by Claim~\ref{cla1}. Moreover, after reordering, it is nested with the smallest family being $\ff(\{p+1\}\cup B)$.  Apply Lemma~\ref{lemcd} to $\mathcal T(B,z)$ for each $z\in [p-r+1]$ and sum up the corresponding inequalities. Note that $\ff(\{j_z\}\cup B)$ gets coefficient $s$ in this summation ($s-p+r$ from $\mathcal T(B,z)$ and $1$ from each of the  $\mathcal T(B,z')$, $z'\in[p-r+1]\setminus \{z\}$). Thus, for fixed $B\in {[p]\choose r-1}$ and provided \begin{equation}\label{eq121} n\ge p+1+(s+u)(k-r),\end{equation} we get
$$ s\sum_{i\in [p]\setminus B}|\ff(\{i\}\cup B)|+(p-r+1)u|\ff(\{p+1\}\cup B)|\le s(p-r+1){n-p-1\choose k-r}.$$
Next, we sum this inequality over all $B\in {[p]\choose r-1}$. Note that, for each $B'\subset {[p]\choose r}$, it will appear in $r$ summands as above, and, for each $B'\in {[p+1]\choose r}\setminus {[p]\choose r}$, it will appear in exactly $1$ summand. That is, we get
\begin{multline*}sr\sum_{B'\in {[p]\choose r}}|\ff(B')|+(p-r+1)
u\sum_{B'\in{[p+1]\choose r}\setminus {[p]\choose r}}|\ff(B')|\le\\ s(p-r+1){p\choose r-1}{n-p-1\choose k-r}=sr{p\choose r}{n-p-1\choose k-r}.\end{multline*}
Put $\aaa'_{p,r}:= \{A\in \aaa_{p,r}: |A\cap [p+1]|\le r\}$. Note that $\big|\aaa'_{p,r}\cap\{F:|F\cap [p+1]|=r\}\big| = {p\choose r}{n-p-1\choose k-r}$, that is, up to a multiplicative factor $sr$, the right hand side of the last formula above. Therefore, we divide both parts by $sr$ and rewrite this formula as follows.
\begin{equation}\label{eq120}\sum_{B'\in {[p]\choose r}}|\ff(B')|+u'\sum_{B'\in{[p+1]\choose r}\setminus {[p]\choose r}}|\ff(B')|\le |\aaa'_{p,r}|,\end{equation}
where
\begin{equation}\label{eq123}u' = \frac {(p-r+1)
u}{sr}.
\end{equation}
Therefore, if $u'\ge M$, then, using \eqref{eq002}, the inequality \eqref{eq120} implies that
\begin{equation}\sum_{B'\subset [p+1], |B'|\le r}|\ff(B')|\le |\aaa'_{p,r}|,\end{equation}
which, together with \eqref{eq001}, completes the proof of the theorem. Finally,  the inequality $u'\ge M$ is equivalent to
$$u\ge \frac {sr}{(p-r+1)
}\cdot (s+1) \sum_{j=0}^{r-1}\frac{s^{r-1-j}{p\choose j}}{{p\choose r-1}}=s(s+1)\sum_{j=0}^{r-1}\frac{s^{r-1-j} {p\choose j}}{{p\choose r}}=f(s,p,r).$$
Since $n\ge p+1+(s+f(s,p,r))(k-r)$, we may take $u=f(s,p,r)$. Then the inequality above, as well as the inequality \eqref{eq121} is satisfied. This completes the proof.

\section{Final remarks}
The question of determining $m(n,k,s,q)$ in general seems to be very hard since it includes some difficult questions, notably the Erd\H os Matching Conjecture, as a subcase. However, such a generalization of the problem might be easier to deal with by means of induction.

We believe that the first natural case to settle completely is the $k=3$ case. The first author proved the Erd\H os Matching Conjecture for $k=3$ and any $n$ in \cite{F11}. We have obtained some partial results in Theorems~\ref{thmsmallq},~\ref{thmf3} and~\ref{thmf2}, which notably show that each of the $3$ potential extremal families suggested by Conjecture~\ref{conj2} are extremal in some ranges. However, a full resolution requires much more work. It seems that the case of large $s$ may be simpler, in particular because some tools are available for large $s$ (cf. \cite{FK16}). Concluding, we suggest the following particular case of Conjecture~\ref{conj2}.
\begin{prb}
  Determine $m(n,3,s,q)$ for all $s\ge s_0$ and all meaningful $n,q$.
\end{prb}
\section{Acknowledgements}
We thank Adam Zsolt Wagner for pointing out a typo in the formulation of Conjecture~\ref{conj2} and anonymous referees for pointing out several issues with the text.  The research of the second author was supported by the Advanced Postdoc.Mobility grant no. P300P2\_177839 of the Swiss National Science Foundation (results in Section 2). The research of both authors was supported by the Ministry of Education and Science of the Russian Federation in the framework of MegaGrant no 075-15-2019-1926.

\end{document}